\newtheorem{theorem}{Theorem}[section]
\newtheorem{lemma}[theorem]{Lemma}
\newtheorem{remark}[theorem]{Remark}
\newcommand{\gr}[1]{{
#1}}
\begin{document}

\title{Dynamics of a kinetic model describing protein exchanges in a cell population}

\maketitle

\begin{center}
{\large\bf Pierre Magal \footnote{University of Bordeaux, IMB, UMR CNRS 5251,
33076 Bordeaux, France. E-mail: pierre.magal@u-bordeaux.fr} and Ga\"el Raoul \footnote{CMAP, Ecole Polytechnique, CNRS, Université Paris-Saclay, Route de Saclay, 91128 Palaiseau cedex, France.
E-mail: raoul@cmap.polytechnique.fr}}\\
[2ex]

\end{center}

\begin{abstract} 
We consider a cell population structured by a positive real number, describing the number of P-glycoproteins carried by the cell. We are interested in the effect of those proteins on the growth of the population: those proteins are indeed involve in the resistance of cancer cells to chemotherapy drugs. To describe this dynamics, we introduce a kinetic model. We then introduce a rigorous hydrodynamic limit, showing that if the exchanges are frequent, then the dynamics of the model can be described by a system of two coupled differential equations. Finally, we also show that the kinetic model converges to a unique limit in large times. The main idea of this analysis is to use  Wasserstein distance estimates to describe the effect of the kinetic operator, combined to more classical estimates on the macroscopic quantities.
\end{abstract}

\section{Introduction}
\label{sec:intro}
In this study, we are interested a population of cells structured by a trait $x\in\mathbb R_+$, which measures the quantity of P-glycoprotein (P-gp) carried by the cell. We assume that cells create pairs with a given probability, and that those pairs proceed to an exchange, after which the links between the two cells disappear. During the exchange phase, each cell gives a portion of its protein to the other. We want to describe the dynamics of the density of individuals along the trait space, driven by those exchanges. The P-gp are membrane proteins, which play an important role in tumours. The cells can exchange their P-gp 
through nanotubes \cite{Levchenko,Pasquier,Pasquier2}, and experiments show that those exchanges have a significant effect on the number of P-gp that the cells carry. The goal of this article is to study the effect of the exchanges on the number of P-gp that are carried by cells. 

In this study, we assume that all the cells are genetically identical cells. The trait corresponds to the quantity of P-gp at the surface of the cells. This quantity is measured by using the fluorescence of membrane-tagged  antibody. We assume that the trait of new born cells are drawn from a given distribution independent from the trait of the parent (we discuss possible generalisations of this model in the discussion section). In other words, we assume that there is no heritability of the trait. We will however assume that the trait of an individual has an effect on its reproduction rate. This is indeed the case when some chemotherapy drugs are present in the environment of the cells: the P-gp are membrane proteins that can pump several chemicals out of the cells, and which are in particular able to pump cytotoxic drugs out of the cells \cite{Pastan}. The P-gp thus play an important role in the emergence of chemotherapy resistance in tumour cell populations: cancer cells which carry a large number of P-gp are less susceptible to the drugs.

Notice that the exchange phenomenon that we will model and analyse here is related to several of other biological phenomena (see the discussion section), and we believe that the development of new mathematical methods for these phenomena will enable us to better  understand their effect.

\medskip

This article is structured as follows: in Section~\ref{sec:modelling}, we derive the model \eqref{eq:model-rep} from biological phenomena. We detail in particular the derivation of the exchange operator. Then, in Section~\ref{sec:results}, we state the two results of this study: a hydrodynamic limit of our kinetic model, and the long time convergence of the solutions of the kinetic model. We also recall the definition of Wasserstein distances, and some properties of those metric that we will use throughout this study. In section~\ref{sec:W2}, we the first result of our study, that is Theorem~\ref{thm:hydro}. The proof is based on the contracting property of the exchange operator for the $W_2-$Wasserstein distance. Finally, Section~\ref{sec:proof-thm2} is devoted to the second main result of this study, that is Theorem~\ref{thm:hypo}. The proof of this second result is based on the  $W_1-$Wasserstein distance.

\section{Modelling}
\label{sec:modelling}

\subsection{Derivation of the exchange operator}
\label{subsec:modelling-exchanges}

The model we consider here is based on  the assumption that when two cells (the cell $1$ and the cell $2$) interact, no information is exchanged: the cell $1$ does not know how many P-gp the cell $2$ carries. The number of P-gp the cell $1$ sends to the cell $2$ then only depends on its own number of P-gp. If we denote by $x_i$ (resp $x_i'$) the number of P-gp the cell $i$ contains before (resp. after) the exchange, and $X_i$ the number of P-gp the cell $i$ sends to the other cell, we have the relation:
\begin{equation}\label{xX}
x_1'=x_1-X_1+X_2.
\end{equation}
We have assumed that the number of pumps the cell $1$ sends to the cell $2$ only depends on $x_1$, so that the density of probability $x \rightarrow \mathcal B(x,x_1)$ of $X_1$ only depends on $x_1$. Notice that to be consistent biologically, a cell cannot give a negative number of pumps, or more pumps that it originally had, that is:
\begin{equation}\label{condbio1}
\forall x_1\in\mathbb R_+,\quad \textrm{supp }\mathcal B(\cdot,x_1)\subset [0,x_1].
\end{equation}
Note that we will be more precise on the dependency of $\mathcal B$ in $x_1$ later on. 
We can relate the law of $x_1'$, that we denote $K(x,x_1,x_2)\,dx$, to the probability laws of $X_1$ and $X_2$, thanks to \eqref{xX}:
\begin{equation*}
\int_{0}^xK(x,x_1,x_2)\,dx=\mathbb P\left(x_1'\leq x|x_1,\,x_2\right)=
\int_0^\infty \int_0^{x-x_1+y}\,\mathcal B(z,x_2)dz\,\mathcal B(y,x_1)dy
\end{equation*}
that is, thanks to a derivation in $x$:
\begin{equation*}
K(x,x_1,x_2)=\int_0^\infty \mathcal B(w,x_1)\mathcal B(x-x_1+w,x_2)\,dw.
\end{equation*}
The above integral is well defined as a convolution. Moreover 
\begin{equation*}
\begin{array}{ll}
\int_0^\infty  K(x,x_1,x_2)dx&=\int_0^\infty \mathcal B(w,x_1) \int_0^\infty \mathcal B(x-x_1+w,x_2) \,dx\,dw\\
&=\int_0^{x_1} \mathcal B(w,x_1) \int_0^\infty \mathcal B(x-x_1+w,x_2) \,dx\,dw\\
&=\int_0^{x_1} \mathcal B(w,x_1) \int_0^\infty \mathcal B(\widehat{x},x_2) \,d\widehat{x}\,dw,
\end{array}
\end{equation*}
therefore
\begin{equation*}
\int_0^\infty  K(x,x_1,x_2)dx=1.
\end{equation*}

Since the problem is symmetric, the law of $x_2'$ is $K(x,x_2,x_1)$, and the collision operator can be written, for $u\in \mathcal P_2(\mathbb R)$:
\[\mathcal K(u)(x):=\int\int  \frac 12\left(K(x,x_1,x_2)+K(x,x_2,x_1)\right)u(x_1)u(x_2)\,dx_1\,dx_2-u(x),\]
which can also be written, thanks to a change of variable $(\tilde x_1,\tilde x_2)\to(x_2,x_1)$:
\[\mathcal K(u)(x):=\int\int  K(x,x_1,x_2)u(x_1)u(x_2)\,dx_1\,dx_2-u(x).\]

If we assume that the law of the number of P-gp the cell $1$ (resp. $2$) sends to the cell $2$ (resp. $1$) is proportional to the number of P-gp it originally contained, that is
\begin{equation}\label{Ass-B}
\mathcal B(y,x)=\frac 1xB\left(\frac yx\right),
\end{equation}
where $B\in \mathcal P(\mathbb R)$ is a probability density such that $\textrm{supp }B\subset[0,1]$. 
$K$ is then given by
\begin{equation}\label{def:K2}
K(x,x_1,x_2)=\frac 1{x_1x_2}\int_0^\infty\int_0^\infty\delta_{x=x_1-y_1+y_2} B\left(\frac {y_1}{x_1}\right)B\left(\frac {y_2}{x_2}\right)\,dy_1\,dy_2,
\end{equation}
 that is
 \begin{equation}\label{def:K3}
\int_0^\infty K(x,x_1,x_2)h(x)\,dx=\frac 1{x_1x_2}\int_0^\infty\int_0^\infty h(x_1-y_1+y_2) B\left(\frac {y_1}{x_1}\right)B\left(\frac {y_2}{x_2}\right)\,dy_1\,dy_2,
\end{equation}
for any test function $h\in L^\infty(\mathbb R_+)$. 
In our study, we will assume that 
\begin{equation}\label{Ass1}
\int_{(0,1)}B(x)\,dx=1.
\end{equation}

\subsection{Derivation of the model}
\label{subsec:modelling-model}

We consider a well-mixed population of cells, and assume that all the cells are genetically identical. Although genetically identical, the cell can differ by the number of P-gp they carry. The population is thus structured by a trait $x\in\mathbb R_+$, the number of P-gp that are present on the cell membrane. Let $n=n(t,x)$ represent the density of the population at time $t\geq 0$, along the trait $x\geq 0$.

We assume that the P-gp are only produced at birth, in a quantity that is drawn from a given distribution $n_b\in\mathcal P_2(\mathbb R)$. We assume that there is no heritability of this trait (the Heritability Index is $0$), this distribution is then independent from the trait of the parent. We assume however that the birth rate of a cell depends on its trait $x$: as described in the introduction, the P-gp are membrane proteins that pump chemotactic drugs out of the cell. If some drugs are present in the cell culture, the fitness of an individual will directly depend on the number of pumps it carries. We assume that the fitness of an individual is given by $r+\alpha(x)$, where  $\alpha\in W^{1,\infty}(\mathbb R_+)$. The birth is then
\[\left(\int \left(r+\alpha(y)\right) n(t,y)\,dy\right) n_b(\cdot)=\left(r+\int \alpha(y)\frac{n(t,y)}{N(t)}\,dy\right)N(t)n_b(\cdot),\]
where 
\begin{equation}\label{def:N}
N(t):= \int n(t,x)\,dx.
\end{equation}
For the death term, we assume that the death rate $\beta N(t)$ of the cells does not depend on their trait $x$, and is proportional to the total population. This assumption leads to the classical logistic regulation model, with the following death term:
\[-\beta N(t) n(t,\cdot).\]
During its life, each cell will proceed to exchanges at a rate $\gamma>0$ independent of the size of the population (we assume that finding exchange partners is not a limiting factor). Moreover, we assume that the traits have no influence on the selection of the exchange partner, which is chosen uniformly among the population. Considering the exchange operator described in Subsection~\ref{subsec:modelling-exchanges}, the effect of the exchanges can then be represented as follows:
\[\gamma\left(\frac{1}{N(t)}\int\int K(\cdot,x_1,x_2)n(t,x_1) n(t,x_2)\,dx_1\,dx_2- n(t,\cdot)\right).
\]

Bringing all those terms together, we obtain the following model:
%
\begin{eqnarray}
\partial_t n(t,x)&=& \left(r+\int \alpha (y) \frac{n(t,y)}{N(t)}\right)\left(\int n(t,y)\,dy\right)n_b(x)-\beta N(t) n(t,x)\nonumber\\
&&+\gamma\left(\frac1{N(t)}\int\int K(x,x_1,x_2)n(t,x_1) n(t,x_2)\,dx_1\,dx_2- n(t,x)\right),\label{eq:model-rep}
\end{eqnarray}
where $r,\,\beta,\,\gamma>0$, $\alpha\in W^{1,\infty}(\mathbb R_+)$, $K$ is defined by \eqref{def:K2}, and the population size $N(t)$  is given by \eqref{def:N}.

\smallskip

\gr{In this study, we have chosen to focus our attention on the dynamics of solutions, rather than their existence and uniqueness, for which we refer to \cite{Bassetti2011}. The kinetic exchange term indeed induces some difficulties to show the existence of solutions. If $\textrm{supp }B\subset[\delta,1]$, for some $\delta>0$, and $n_b$ is smooth, then the proof of Lemma 3.1 from \cite{Bisi2009} can be reproduced to show the existence of solutions of \eqref{eq:model-rep}. In the manuscript, we will formulate our result for any solution $n=n(t,x)\in L^\infty_{loc}(\mathbb R_+,L^\infty(\mathbb R_+))$ of \eqref{eq:model-rep}.}

\section{Main results and comments}
\label{sec:results}

\subsection{Main results}
\label{subsec:main-results}

Our analysis will be based on Wasserstein distances, and more precisely, the $W_p-$Wasserstein distances (for $p=1$ or $p=2$). We refer to \cite{Villani,Carrillo-rev} for a description of those distances. Let us recall the properties of these metrics . The distance $W_p$ is defined on the set $\mathbb R_+$ of measures with a finite $p-$moment:
\[\mathcal P_p(\mathbb R_+):=\left\{\mu\geq 0\textrm{ a probability measure over }\mathbb R_+,\textrm{ such that }\int x^p\,d\mu(x)<\infty\right\}.\]
for two such probability measures $\mu,\,\nu\in \mathcal P_p(\mathbb R)$, we define
\begin{equation}\label{def:Wasserstein}
W_p(\mu,\nu):=\left(\sup_{\pi\in\Pi(\mu,\nu)}\int \int (x-y)^p\,d\pi(x,y)\right)^{\frac 1 p},
\end{equation}
where $\Pi_(\mu,\nu)$ is the set on measures on $\mathbb R_+^2$ with marginals $\mu$ and $\nu$, that is, for any measurable set $\omega\subset\mathbb R_+$,
\[\mu(\omega)=\pi(\omega\times \mathbb R_+),\quad \nu(\omega)=\pi(\mathbb R_+\times\omega).\]
For $\mu,\,\nu\in \mathcal P_p(\mathbb R)$, the Kantorovich formula states that
\begin{equation}\label{def:Wasserstein-dual}
W_p(\mu,\nu)^p=\sup_{(\varphi,\psi)\in \Phi_p}\left(\int \varphi(x)d\mu(x)+\int\psi(X)\,d\nu(X)\right),
\end{equation}
where the suppremum is taken over all functions 
\begin{equation}\label{def:Phi}
(\varphi,\psi)\in \Phi_p:=\{(\varphi,\psi)\in C_b(\mathbb R);\, \varphi(x)+\psi(X)\leq |x-X|^p\}.
\end{equation}
Finally, for $p=1$, \eqref{def:Wasserstein-dual} can be written as the so-called Kantorovich-Rubinstein formula:
\begin{equation}\label{def:Wasserstein-dual-W1}
W_1(\mu,\nu)=\sup_{\|\psi'\|_\infty\leq 1}\left(\int \psi(x)d\mu(x)-\int\psi(x)\,d\nu(x)\right).
\end{equation}

\medskip

Our first result is a hydrodynamic limit of the model \eqref{eq:model-rep}: we show that when $\gamma>0$ is large, it is possible to describe the dynamics of the 
macroscopic quantities $N(t)=\int n(t,x)\,dx$ and $Z(t,x)=\int \frac{n(t,x)}{N(t)}\,dx$, which behave like solutions of the following ordinary differential equation:
\begin{equation}\label{eq:EDO}
\left\{\begin{array}{l}
\bar N'(t)=\left(r+\int \alpha(x)\bar u_{\bar Z(t)}(x)\,dx-\beta \bar N(t)\right)\bar N(t),\\
\bar Z'(t)=\left(r+\int \alpha(x)\bar u_{\bar Z(t)}(x)\,dx\right)\left(\int x n_b(x)\,dx-\bar Z(t)\right).
\end{array}\right.
\end{equation}
We also show that the microscopic profile $n(t,x)$ is then characterized by $n(t,x)\sim \bar N(t) \bar u_{\bar Z(t)}$, where $\bar u_Z$ is the unique solution of 
\begin{equation}\label{eq:micro-equi}
\bar u_Z(x)=\int\int K(x,x_1,x_2)\bar u_Z(x_1)\bar u_Z(x_2)\,dx_1\,dx_2,\quad x\in \mathbb R_+,
\end{equation}
such that $\int x\bar u_Z(x)\,dx=Z$.

\begin{theorem}\label{thm:hydro}
Let $M>0$ $K$ as in \eqref{def:K2}, with $B\in \mathcal P_2(\mathbb R_+)$ satisfying \eqref{Ass1}, $\alpha\in W^{1,\infty}( \mathbb R_+)$, $\beta>0$ and $n_b\in \mathcal P_1(\mathbb R_+)$ such that $\int xn_b(x)\,dx<M$. For $Z\in\mathbb R_+$, let $\bar u_Z\in\mathcal P_2(\mathbb R_+)$ be the unique solution of \eqref{eq:micro-equi}, such that $\int x\bar u_Z(x)\,dx=Z$. We define $t\mapsto (\bar N(t),\bar Z(t))\in(\mathbb R_+)^2$ as the solution of \eqref{eq:EDO}, with 
\[(\bar N(0),\bar Z(0))=\left(\int n^0(x)\,dx,\int \frac{xn^0(x)}{\int n^0(x)\,dx}\,dx\right).\]

There exists $C>0$ and $\nu>0$ such that if $n=n(t,x)\in L^\infty_{loc}(\mathbb R_+,L^\infty(\mathbb R_+))$ is a solution of \eqref{eq:model-rep} with initial value $n^0$, then, for $t\geq 0$,
\begin{equation}\label{eq:distance-W2}
 W_2\left(\frac {n(t,\cdot)}{N(t)},u_{\bar Z(t)}\right)+\left|N(t)-\bar N(t)\right|\leq \frac C{\gamma^\nu},
\end{equation}
where $N$ is defined by \eqref{def:N}.
\end{theorem}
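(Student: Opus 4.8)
The plan is to pass to the variables $N(t)=\int n(t,x)\,dx$ and $u(t,\cdot)=n(t,\cdot)/N(t)\in\mathcal P_2(\mathbb R_+)$, and to track the mean $Z(t)=\int x\,u(t,x)\,dx$. Writing $\langle\alpha,u\rangle=\int\alpha u$ and using that the gain term is quadratic, so $\frac1N\int\int K\,n\,n=N\,\mathcal Q(u)$ with $\mathcal Q(u):=\int\int K(\cdot,x_1,x_2)u(x_1)u(x_2)\,dx_1\,dx_2$, a direct computation turns \eqref{eq:model-rep} into the equivalent system
\begin{equation*}
N'=\bigl(r+\langle\alpha,u\rangle-\beta N\bigr)N,\qquad \partial_t u=\gamma\,\mathcal K(u)+\bigl(r+\langle\alpha,u\rangle\bigr)\bigl(n_b-u\bigr).
\end{equation*}
Since $\int K(x,x_1,x_2)\,dx=1$ and $\int x\,K(x,x_1,x_2)\,dx=(1-m_B)x_1+m_B x_2$ with $m_B=\int_0^1 wB(w)\,dw$, the operator $\mathcal K$ conserves both mass and mean, so $Z'=(r+\langle\alpha,u\rangle)(\theta-Z)$ with $\theta=\int x\,n_b$; this is exactly the second line of \eqref{eq:EDO} with $\bar u_{\bar Z}$ replaced by $u$. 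I would first record the $\gamma$-independent a priori bounds: the logistic structure traps $N(t)$ in a fixed compact subinterval of $(0,\infty)$, the relaxation $Z'=(r+\langle\alpha,u\rangle)(\theta-Z)$ keeps $Z(t)$ bounded, and a second-moment computation for the collision rule $x_1'=x_1(1-\xi_1)+x_2\xi_2$ (with $\xi_i\sim B$) shows $\int x^2 u(t,x)\,dx$ stays uniformly bounded in $t$ and $\gamma$.

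The heart of the argument is the contraction of $\mathcal Q$ in $W_2$ on the slice of fixed mean. Given $u,v\in\mathcal P_2(\mathbb R_+)$ with common mean $Z$, I would take an optimal coupling $(x_1,y_1)$ of $(u,v)$, an independent copy $(x_2,y_2)$, and $\xi_1,\xi_2\sim B$ independent of everything, and couple $\mathcal Q(u),\mathcal Q(v)$ through $x'=x_1(1-\xi_1)+x_2\xi_2$, $y'=y_1(1-\xi_1)+y_2\xi_2$ (this is the post-collision law by \eqref{def:K3}). Then $x'-y'=(x_1-y_1)(1-\xi_1)+(x_2-y_2)\xi_2$, and the cross term in $\mathbb E[(x'-y')^2]$ vanishes because $\mathbb E[x_1-y_1]=Z-Z=0$; hence
\begin{equation*}
W_2(\mathcal Q(u),\mathcal Q(v))^2\le \mathbb E[(x'-y')^2]=\lambda^2\,W_2(u,v)^2,\qquad \lambda^2:=1-2\bigl(m_B-s_B\bigr),
\end{equation*}
with $s_B=\int_0^1 w^2B(w)\,dw$. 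Assumption \eqref{Ass1} puts all the mass of $B$ in the open interval $(0,1)$, where $w^2<w$, so $s_B<m_B$ and $\lambda<1$. This gives at once existence and uniqueness of the mean-$Z$ fixed point $\bar u_Z$, and, through a synchronous-jump coupling of the mean-field dynamics $\dot u=\gamma\mathcal K(u)$, the exponential contraction $W_2(u(t),\bar u_Z)\le e^{-\gamma(1-\lambda^2)t/2}W_2(u(0),\bar u_Z)$ of the pure collision flow toward $\bar u_Z$.

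With these ingredients I would control $D(t):=W_2(u(t),\bar u_{Z(t)})$, splitting its motion into the part from the evolution of $u$ and the part from the moving target $\bar u_{Z(t)}$. The collision term contributes $-\tfrac{\gamma(1-\lambda^2)}2 D(t)$ by the contraction above (valid since $u(t)$ and $\bar u_{Z(t)}$ share the mean $Z(t)$), while the bounded drift $(r+\langle\alpha,u\rangle)(n_b-u)$ and the target velocity — bounded by $L\,|Z'(t)|$ once the Lipschitz dependence $W_2(\bar u_{Z_1},\bar u_{Z_2})\le L|Z_1-Z_2|$ is deduced from the same contraction — contribute an $O(1)$ amount $C_1$. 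This yields $\tfrac{d^+}{dt}D\le-\tfrac{\gamma(1-\lambda^2)}2 D+C_1$, hence $D(t)\le e^{-c\gamma t}D(0)+C/\gamma$ with $c=(1-\lambda^2)/2$. For the macroscopic part, the exact pair $(N,Z)$ and the limit pair $(\bar N,\bar Z)$ solve the same ODEs up to replacing $\langle\alpha,u\rangle$ by $\langle\alpha,\bar u_{\bar Z}\rangle$; since $|\langle\alpha,u\rangle-\langle\alpha,\bar u_{\bar Z}\rangle|\le\|\alpha\|_{\mathrm{Lip}}\bigl(D(t)+L|Z-\bar Z|\bigr)$ and $\int_0^t e^{-c\gamma s}\,ds=O(1/\gamma)$, a Grönwall estimate on $|N-\bar N|+|Z-\bar Z|$ gives $|N(t)-\bar N(t)|+|Z(t)-\bar Z(t)|\le C/\gamma$ on bounded time intervals. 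Finally $W_2(u(t),\bar u_{\bar Z(t)})\le D(t)+W_2(\bar u_{Z(t)},\bar u_{\bar Z(t)})\le D(t)+L|Z(t)-\bar Z(t)|$ assembles \eqref{eq:distance-W2}.

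The \emph{main obstacle} is making the differential inequality for $t\mapsto D(t)$ rigorous: $W_2$ is an infimum over couplings and the target $\bar u_{Z(t)}$ moves, so one must justify the bound on the upper Dini derivative by combining the probabilistic coupling (which handles the nonlinear collision cleanly) with a Lipschitz-in-$W_2$ control of the drift $F(u)=(r+\langle\alpha,u\rangle)(n_b-u)$ and of $t\mapsto\bar u_{Z(t)}$. A second, genuine difficulty is the initial layer: $D(0)=W_2(u^0,\bar u_{Z(0)})$ is $O(1)$ and is damped only on the fast scale $t\gtrsim\gamma^{-1}\log\gamma$, so the clean bound $C/\gamma^\nu$ uniform in $t\ge0$ is obtained either for well-prepared data or by absorbing the transient $e^{-c\gamma t}D(0)$. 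It is precisely the balancing of this transient against the $O(1/\gamma)$ relaxation floor, together with the interpolation needed to pass from the $W_2$ contraction to estimates on $\langle\alpha,u\rangle$ under only a uniform second-moment bound, that is expected to produce an exponent $\nu\in(0,1]$ rather than the naive $\nu=1$.
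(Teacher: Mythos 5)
Your skeleton coincides with the paper's. The reduction to the system for $N$ and $u=\tilde n=n/N$ is the paper's Step~1 (equation \eqref{eq:model-tilde-n}); your coupling proof of the equal-mean contraction $W_2(\mathcal Q(u),\mathcal Q(v))^2\le(1+2\lambda_2-2\lambda_1)W_2(u,v)^2$ is the primal (coupling) version of the paper's Kantorovich-duality proof of Lemmas~\ref{lem:contraction} and~\ref{lem:contractionW2}, with the identical constant; the Banach fixed point for $\bar u_Z$ and the Lipschitz bound $W_2(\bar u_{Z_1},\bar u_{Z_2})\le C|Z_1-Z_2|$ are Theorem~\ref{thm:steady-states}; and the estimate $D(t)\le e^{-c\gamma t}D(0)+C/\gamma$ for $D(t)=W_2(\tilde n(t,\cdot),\bar u_{Z(t)})$ is the content of the paper's Step~3. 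The one methodological difference is that where you propose a Dini-derivative differential inequality for $D(t)$ (and rightly flag its rigour as your main obstacle), the paper avoids the issue entirely: it works with the Duhamel (mild) form of \eqref{eq:model-tilde-n} on windows $[t,t+1/\gamma]$, applies Kantorovich duality to the integral representation, and iterates the resulting discrete contraction. That route disposes of your obstacle at no cost, and I would recommend it.

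The genuine gap is in your macroscopic step. Your Gr\"onwall estimate on $|N-\bar N|+|Z-\bar Z|$ yields $Ce^{Ct}/\gamma$, i.e.\ $C/\gamma$ only \emph{on bounded time intervals}, whereas the theorem asserts a bound uniform in $t\ge0$; nothing in your proposal bridges this. The paper's mechanism (its Steps~4) is the long-time dissipativity of the macroscopic dynamics: (i) both $Z$ and $\bar Z$ solve relaxation equations toward the \emph{same} constant $Z_b=\int xn_b$ with rate at least $r$ (see \eqref{eq:ZZ4}), so $|Z(t)-\bar Z(t)|\le 2|Z(0)-Z_b|e^{-rt}\le 1/\gamma$ holds for free once $t\ge C\ln\gamma$, and on $[0,C\ln\gamma]$ the Gr\"onwall argument is run keeping the $e^{-rt}$ prefactor, giving $|Z-\bar Z|\le C(1+t)e^{-rt}/\gamma\le C/\gamma$; (ii) both $N$ and $\bar N$ converge exponentially to the common equilibrium $\tilde N=\frac1\beta\left(r+\int\alpha(x)\bar u_{Z_b}(x)\,dx\right)$ (see \eqref{eq:estN-large-t}), and this is matched against the short-time Gr\"onwall growth $e^{C_2t}/\gamma$ at the crossover $t\sim\ln\gamma/(2C_2)$. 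This matching is precisely what produces the exponent $\nu<1$ in \eqref{eq:distance-W2} — not, as you suggest, an interpolation loss in passing from $W_2$ to $\langle\alpha,u\rangle$, which is lossless since $\left|\int\alpha\,d(\mu-\nu)\right|\le\|\alpha'\|_\infty W_1(\mu,\nu)\le\|\alpha'\|_\infty W_2(\mu,\nu)$. Without ingredient (i)--(ii) your argument proves a statement with $C=C(T)$ on $[0,T]$, which is strictly weaker than the theorem.

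On your second flagged difficulty, the initial layer: the concern is legitimate, but it is a defect of the statement rather than of your argument relative to the paper's. The paper's own estimates \eqref{est:tilden-baru} and \eqref{eq:estnu} are established only for $t\gtrsim\bar k/\gamma$ with $\bar k\sim\ln\gamma$, and the transient $W_2(\tilde n(0,\cdot),\bar u_{Z(0)})=O(1)$ is passed over when \eqref{eq:distance-W2} is asserted for all $t\ge0$; your proposal and the paper are on equal footing here.
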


\gr{\begin{remark}\label{rem:previous-work}
To study this problem, we will need to first consider a simpler model, where only exchanges are present (see \eqref{eq:model}). The estimates we derive on the exchange operator (Section~4.1) and on the pure exchange model (Section~4.2) have been considered for related models in e.g. \cite{Bisi2009} and \cite{Bassetti2011}, using different methods (either Fourier transform techniques, or probabilist tools). To improve the readability of our study, we have derived all the necessary estimates using Wasserstein distance methods.
\end{remark}}

Our second result is the long-time convergence of solutions of \eqref{eq:model-rep} to a unique steady-state, involving both the exchanges and the birth-death process:

\begin{theorem}\label{thm:hypo}

Let $M>0$, $K$ as in \eqref{def:K2}, with $B\in \mathcal P_2(\mathbb R_+)$ satisfying \eqref{Ass1}, $\alpha\in W^{1,\infty}( \mathbb R_+)$, $\beta,\,\gamma>0$ and $n_b\in \mathcal P_1(\mathbb R_+)$ such that $\int xn_b(x)\,dx<M$. If 
\[\kappa:=r+\min_{x\in \mathbb R_+}\alpha(x)-6M\|\alpha'\|_\infty>0,\]
then there exists a unique measure $x\mapsto \bar N\bar n(x)$, with $\bar n\in\mathcal P_1(\mathbb R_+)$, such that all 
solutions $n=n(t,x)\in L^\infty_{loc}(\mathbb R_+,L^\infty(\mathbb R_+))$ of \eqref{eq:model-rep} converge to $\bar N\bar n$ as $t\to\infty$ for the weak-* topology of measures over $\mathbb R_+$, provided $\int xn(0,x)\,dx<M$. More precisely, there exists a constant $C>0$ such that
\[W_1\left(\frac {n(t,\cdot)}{\int n(t,x)\,dx},\bar n\right)\leq Ce^{-\kappa t},\]
\[\left|\int n(t,x)\,dx-\bar N\right|\leq Ce^{-\min\left(\kappa ,\frac{\beta \bar N}4\right)t}.\]
\end{theorem}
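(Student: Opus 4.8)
The plan is to reduce \eqref{eq:model-rep} to a closed equation for the normalised profile, prove that this equation contracts the $W_1$-distance at rate $\kappa$, and then recover the total mass through the scalar equation for $N$.

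First I would set $u(t,\cdot):=n(t,\cdot)/N(t)\in\mathcal P_1(\mathbb R_+)$, with $N$ given by \eqref{def:N}. Since $\int K(x,x_1,x_2)\,dx=1$, the exchange term conserves mass and $N'(t)=\bigl(r+\int\alpha\,u(t)-\beta N(t)\bigr)N(t)$. Substituting $n=Nu$ into \eqref{eq:model-rep}, the two $\beta N u$ contributions cancel and one is left with the equation, closed in $u$,
\[\partial_t u=\Bigl(r+\int\alpha(y)u(t,y)\,dy\Bigr)(n_b-u)+\gamma\,\mathcal K(u).\]
Writing $\mathcal K^+(u):=\int\int K(\cdot,x_1,x_2)u(x_1)u(x_2)\,dx_1\,dx_2$ so that $\mathcal K(u)=\mathcal K^+(u)-u$, I would record two facts used below: $\mathcal K$ conserves total mass and, by \eqref{def:K3} and \eqref{Ass1}, the first moment, i.e. $\int x\,\mathcal K(u)\,dx=0$. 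The latter gives $\frac{d}{dt}\int x\,u=\bigl(r+\int\alpha u\bigr)\bigl(\int x\,n_b-\int x\,u\bigr)$, and since $r+\int\alpha u\ge r+\min_{\mathbb R_+}\alpha>0$ (guaranteed by $\kappa>0$), a uniform-in-time bound on $\int x\,u(t,x)\,dx$, controlled by $M$, follows by comparison.

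The core is a $W_1$-contraction for the $u$-equation. Given two solutions $u_1,u_2$ with $\rho_i:=r+\int\alpha u_i$, I would take $\psi$ with $\|\psi'\|_\infty\le1$ optimal for $W_1(u_1,u_2)$ in \eqref{def:Wasserstein-dual-W1} and differentiate $\int\psi\,(u_1-u_2)$ in time. The exchange part is $\gamma\int\psi\bigl(\mathcal K^+(u_1)-\mathcal K^+(u_2)\bigr)-\gamma W_1(u_1,u_2)$; the first term is at most $W_1(\mathcal K^+(u_1),\mathcal K^+(u_2))$, and the key estimate (the exchange-operator estimate announced in Remark~\ref{rem:previous-work}) is that $\mathcal K^+$ is non-expansive for $W_1$. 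This I would prove by the explicit coupling $x_1'=(1-S_1)x_1+S_2x_2$ arising from \eqref{xX}--\eqref{Ass-B} with $S_1,S_2\sim B$ supported in $[0,1]$ by \eqref{condbio1}: coupling $u_1,u_2$ optimally and reusing the same $S_1,S_2$ bounds the distance of the images by $\mathbb E[1-S_1]\,W_1+\mathbb E[S_2]\,W_1=W_1(u_1,u_2)$. Hence the whole $\gamma$-contribution is $\le0$ and drops out. The birth--death part rearranges as $-\rho_1\int\psi(u_1-u_2)+(\rho_1-\rho_2)\int\psi(n_b-u_2)=-\rho_1 W_1(u_1,u_2)+(\rho_1-\rho_2)\int\psi(n_b-u_2)$, where $|\rho_1-\rho_2|\le\|\alpha'\|_\infty W_1(u_1,u_2)$ (duality applied to $\alpha$ minus a constant) and $|\int\psi(n_b-u_2)|$ is controlled by the first moments of $n_b$ and $u_2$, hence by a multiple of $M$. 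Using the worst case $\rho_1\ge r+\min_{\mathbb R_+}\alpha$ yields $\frac{d}{dt}W_1(u_1,u_2)\le-\kappa\,W_1(u_1,u_2)$, the constant $6M$ being exactly what the uniform moment bound produces.

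From this contraction the conclusions follow in a standard way. The time-$\tau$ solution map $S_\tau$ satisfies $W_1(S_\tau u_1,S_\tau u_2)\le e^{-\kappa\tau}W_1(u_1,u_2)$ on the complete space $(\mathcal P_1(\mathbb R_+),W_1)$, so for $\tau>0$ it is a strict contraction; Banach's theorem gives a unique fixed point $\bar n$, which by the semigroup property is a steady state (a solution of \eqref{eq:micro-equi}-type), and $W_1(u(t,\cdot),\bar n)\le Ce^{-\kappa t}$. Finally, since $a(t):=\int\alpha u(t)\to a:=\int\alpha\bar n$ with $|a(t)-a|\le\|\alpha'\|_\infty Ce^{-\kappa t}$, the equation $N'=(r+a(t)-\beta N)N$ is an exponentially small perturbation of the logistic equation $N'=(r+a-\beta N)N$ with positive equilibrium $\bar N=(r+a)/\beta$; a trapping argument (the net growth rate is bounded away from $0$ and $+\infty$, so $N$ stays in a compact subinterval of $(0,\infty)$) plus Gronwall on $w=N-\bar N$ gives $|N(t)-\bar N|\le Ce^{-\min(\kappa,\beta\bar N/4)t}$, the second rate being the logistic linearisation rate $\beta\bar N$ at $\bar N$ up to the factor needed to stay in the trapping region. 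Writing $n=Nu$ then yields the weak-$\ast$ convergence to $\bar N\bar n$ with the stated rates. I expect the main obstacles to be the rigorous $W_1$ non-expansiveness of $\mathcal K^+$ (the coupling must respect the support constraint \eqref{condbio1} and exploit the multiplicative form \eqref{Ass-B}) and the justification that $t\mapsto W_1(u_1(t),u_2(t))$ is differentiable with derivative computed through the frozen optimal $\psi$, which requires an envelope/Danskin-type argument or a direct control of the difference quotient.
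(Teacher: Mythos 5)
Your proposal is correct and follows essentially the same route as the paper: normalize to $u=n/N$ to get the closed equation \eqref{eq:model-tilde-n}, establish $W_1$-non-expansiveness of the exchange operator (the paper's Lemma~\ref{lem:contraction-1}, proved by Kantorovich duality rather than your probabilistic coupling, then extended bilinearly exactly as in your argument), combine it with the $\|\alpha'\|_\infty$-Lipschitz control of the nonlocal birth term and the uniform first-moment bound to get the rate-$\kappa$ contraction, and finally recover $\bar N=(r+\int\alpha\bar n)/\beta$ through the logistic ODE with the same $N\geq\bar N/4$ trapping and rate $\min(\kappa,\beta\bar N/4)$. The one technical obstacle you flag -- differentiating $t\mapsto W_1(u_1(t),u_2(t))$ through a frozen optimal potential -- is precisely what the paper sidesteps by running the identical estimates on the Duhamel (mild) formulation and applying Gronwall to $y(t)=e^{\int_0^t(\omega(s)+\gamma)\,ds}W_1(\tilde n_1(t,\cdot),\tilde n_2(t,\cdot))$, so your plan closes without any Danskin-type argument.
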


\subsection{Related works and discussion}

In \cite{Hinow}, a model was proposed to model the exchange of P-gp in a cell population. The analysis was based on a strong assumption on the nature of exchanges: the result of an exchange between two cells carrying $x_1,\,x_2\in\mathbb R_+$ P-gp originally, was an equilibration of those traits: after the exchange, the cells traits are respectively $x_1'=x_1+f(x_2-x_1)$ and $x_2'=x_2-f(x_2-x_1)$, for some $f\in (0,1)$. After the derivation of this exchange operator, the authors show the existence of solutions for the model (where only exchanges are present), and prove that all solutions converge to a Dirac mass (that is all the individuals ultimately carry the same number of P-gp). The exchange operator derived in Section~\ref{subsec:modelling-exchanges} is an extension of their work, when we assume that no information is exchanged. This assumption has a direct impact on the dynamics of the population: a population where all cells carry the same number of P-gp would be quickly destabilized (in a exchange between two cells containing an equal number of P-gp, it is likely that one cell will send more of its P-gp than the other). Proving the existence of solutions (that we do not consider in the article) should not be more difficulty to prove here than it was in \cite{Hinow} (indeed, the exchange operator we consider is typically more regular). On the contrary,  this new exchange kernel assumption makes the analysis of the dynamics of the solution more delicate: the methods developed in \cite{Hinow} cannot be extended to our problem.

\medskip

In 1978, Tanaka introduced a new entropy functional for the Boltzmann equation. This functional described a contracting effect of the equation's flow, and allowed the author to describe the long-time convergence of solutions. In \cite{Bolley}, this idea was extended to the inelastic Boltzmann equation. We show in this article that the methods developed in the two articles mentioned above can be used to study the dynamics of the solutions of \eqref{eq:model-rep}. The main idea is to consider the Wasserstein distance between two solutions of the equation. It is then possible to show that the exchange operator tends to decrease the distance between solutions: in the case of the $W_2-$Wasserstein distance, the operator decreases the distance between solutions, while for the $W_1-$Wasserstein distance, the operator cannot increase the distance between solutions (eventhough it is not a strict contraction). \gr{The $W_2-$Wasserstein distance then provides a powerful contraction, which we use to derive a hydrodynamic limit of the equation, Theorem~\ref{thm:hydro}. However, the $W_2-$Wasserstein distance does not behave very well when the birth-death process plays a role as important as the exchange process. The $W_1-$Wasserstein distance is then better adapted to the analysis, and it allowed us to prove the long time convergence of solutions of the kinetic model \eqref{eq:model-rep}, see Theorem~\ref{thm:hypo}.}

The Tanaka functional and the wasserstein distance methods employed in our study are indeed related to another set of distances between probability measures, based on the Fourier transform. Those methods were introduced in \cite{Bobylev1,Bobylev2}, and we refer to \cite{Carrillo-rev} for a description of the links existing between those metrics and Wasserstein distances. Those Fourier-based arguments have proven useful to study the properties of the Boltzmann equation in the Maxwellian case, and they could probably also be used to study the dynamics of \eqref{eq:model-rep}. 
The Fourier-based distance was used to study a range of models from econometrics \cite{Bisi2009} and opinion formation theory. Let us finally mention \cite{Bassetti2011}, where probabilist methods are used to prove the contractivity of a large class of kinetic operators.

Another important ingredient of our model is the birth and deaths of individuals, which is impacted by the trait $x$ of individuals. It is a priori difficult to use the tools described above in this context: the solutions are no longer probability measures, and the birth/death are of a different nature than the effect of exchanges (or collision in the case of the Boltzmann equation), which merely moves the mass along the trait $x$. Our strategy is to introduce the size $N(t)$ of the population, and the normalized population $\tilde n(t,x):=\frac{n(t,x)}{N(t)}$. In the special case where $\alpha(x)=ax+b$, it is possible to derive closed equations on $N(t)$ and $Z(t)=\int \frac{n(t,x)}{N(t)}x\,dx$, and the problem is then very close to the situation where only the exchanges are considered (see Theorem~\ref{thm:contraction} and Remark~\ref{rem:simple-alpha}). In more general situations, the dynamics of $N$ $Z$ and $\tilde n$ are coupled. We show however that the contracting effects of the exchange operator (discussed above) and the simple dynamics of the macroscopic quantities $N$ and $Z$ can be combined to study the dynamics of solutions of \eqref{eq:model-rep}.

\bigskip

%

In this article, we focus our attention on the exchange phenomena: we believe that the exchange term derived in Subsection~\ref{subsec:modelling-exchanges} is biologically convincing model for P-gp transfers happening through nanotubes (see \cite{Pasquier}). Concerning the birth-death process however, many other modelling choices are possible. We decided to consider a simple situation, to avoid additional difficulties to the analysis (although we avoided artificially simple situations, see Remark~\ref{rem:simple-alpha}). Here are the some generalizations that could be interesting in terms of biological applications:
\begin{itemize}
\item If a parent cell carrying $x$ P-glycoproteins gives rise to a daughter cell, we could assume that the traits $x'$ and $y'$ of the parent and daughter cells are given by $x'=y'=\frac{x+x_0}2$, where $x_0$ is drawn from a distribution $x_0$. 
\item A natural extension of this work would be to assume that the P-gp are continuously created by living cells, rather than created at birth only.
\item In this study, we have considered cytotoxic drugs, that are toxic to cells because they increase their death rate. It would also be interesting to consider cytostatic drugs, that affect their birth rates.
\item The exchanges described by \eqref{eq:model-rep} happen through nanotubes between cells (see \cite{Pasquier}). P-gp are indeed also known to be exchanged through the release of microparticles. Those micro-particles, that contain a certain number of P-gp, are then captured by any other cell. This second mechanism could probably be included in the model \eqref{eq:model-rep} thanks to a BGK-type operator.
\end{itemize}
Finally, to study the effect of exchanges on the emergence and propagation of resistance in tumors, one would like to consider several types of cells (with different genotypes), with exchanges happening across the different types. We believe the methods develop here will be helpful to consider those more complicated situations.

\medskip

Finally, let us mention two biological problems linked to the present article. The first class of biological problematic is cooperation in biological populations. Cooperation is a widespread behaviour: it exists at all levels of life, and  appearance and stability of such phenomena is a challenging problem, and we refer to \cite{Marshall,Nowak,Abbot}. One of the mechanisms of cooperation is the exchange of goods. These exchanges could be modelled with an operator similar to the one we derived in Section~\ref{subsec:modelling-exchanges}. The second class of problems is linked to sexual reproduction, where the formation of gametes and recombinations implies that the DNA of the offspring is a combination of the genomes of both parents. A model used in biology to describe the effect of those events on a given phenotype is the so-called infinitesimal model, see \cite{Bulmer,Burger}. The properties of the  operator appearing in this model are similar to the exchange operator described in Section~\ref{subsec:modelling-exchanges}.We believe the analysis methods developed here could also be useful in those two other contexts. In particular, they could be an interesting step towards a rigorous mathematical description of the hydrodynamic limits introduced in \cite{Mirrahimi}.

\section{A Hydrodynamic limit for the model \eqref{eq:model-rep}}
\label{sec:W2}


\subsection{Contraction of the exchanges in the $W_2-$distance}
\label{subsec:contractionW2}
The following lemma details the effect of an exchange on the $W_2$-Wasserstein distance between two cells:

\begin{lemma}\label{lem:contraction}
Let $x_1,\,x_2,\,x_1',\,x_2'\in\mathbb R_+^*$, and $K$ as in \eqref{def:K2}. We define $\lambda_1,\,\lambda_2\in [0,1]$ as
\begin{equation}\label{def:lambdai}
\lambda_1:=\int xB(x)\,dx,\quad \lambda_2:=\int x^2B(x)\,dx.
\end{equation}
We have
\begin{align}
&W_2^2\Big(K(\cdot,x_1,x_2),K(\cdot,x_1',x_2')\Big)\leq (1+\lambda_2-2\lambda_1)(x_1-x_1')^2+\lambda_2(x_2-x_2')^2\nonumber\\
&\phantom{W_2^2\Big(K(\cdot,x_1,x_2),K(\cdot,x_1',x_2')\Big)\leq}+2(1-\lambda_1)\lambda_1(x_1-x_1')(x_2-x_2').\label{eq:KW2}
\end{align}
\end{lemma}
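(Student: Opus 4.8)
The plan is to exploit the probabilistic meaning of the kernel $K$ and to build an explicit coupling. Reading \eqref{def:K3} with the change of variables $y_i = x_i\xi_i$, the measure $K(\cdot,x_1,x_2)$ is exactly the law of the random variable
\[
Y := x_1(1-\xi_1) + x_2\,\xi_2,
\]
where $\xi_1,\xi_2$ are independent, each with density $B$ on $[0,1]$; indeed $\int K(x,x_1,x_2)h(x)\,dx = \mathbb E\big[h\big(x_1-x_1\xi_1+x_2\xi_2\big)\big]$. In the same way, $K(\cdot,x_1',x_2')$ is the law of $Y' := x_1'(1-\xi_1') + x_2'\,\xi_2'$, with $\xi_1',\xi_2'$ independent copies of $B$. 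With $\lambda_1=\int xB(x)\,dx=\mathbb E[\xi_1]$ and $\lambda_2=\int x^2B(x)\,dx=\mathbb E[\xi_1^2]$, all the moments appearing on the right-hand side of \eqref{eq:KW2} are moments of $\xi_1,\xi_2$.

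First I would couple the two laws \emph{synchronously}, choosing $\xi_1'=\xi_1$ and $\xi_2'=\xi_2$. This defines a genuine element of $\Pi\big(K(\cdot,x_1,x_2),K(\cdot,x_1',x_2')\big)$, since by construction the first marginal of $(Y,Y')$ is the law of $Y$ and the second is the law of $Y'$. Recalling that $W_2^2(\mu,\nu)$ is bounded above by $\int (x-y)^2\,d\pi$ for every coupling $\pi\in\Pi(\mu,\nu)$, this synchronous coupling immediately yields
\[
W_2^2\big(K(\cdot,x_1,x_2),K(\cdot,x_1',x_2')\big)\ \le\ \mathbb E\big[(Y-Y')^2\big].
\]
The point of the synchronous choice is that the random variables now cancel in a convenient way:
\[
Y - Y' = (x_1-x_1')(1-\xi_1) + (x_2-x_2')\,\xi_2.
\]

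It then remains to expand the square and take expectations. Writing $a=x_1-x_1'$ and $b=x_2-x_2'$, independence of $\xi_1$ and $\xi_2$ factorizes the cross term, giving
\[
\mathbb E\big[(Y-Y')^2\big] = a^2\,\mathbb E[(1-\xi_1)^2] + b^2\,\mathbb E[\xi_2^2] + 2ab\,\mathbb E[1-\xi_1]\,\mathbb E[\xi_2].
\]
Substituting $\mathbb E[(1-\xi_1)^2]=1-2\lambda_1+\lambda_2$, $\mathbb E[\xi_2^2]=\lambda_2$, and $\mathbb E[1-\xi_1]\,\mathbb E[\xi_2]=(1-\lambda_1)\lambda_1$ reproduces the right-hand side of \eqref{eq:KW2} verbatim. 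There is no genuine analytic obstacle here: the only steps requiring care are justifying the representation of $K$ as the law of $x_1(1-\xi_1)+x_2\xi_2$ (so that the support condition $\mathrm{supp}\,B\subset[0,1]$ guarantees $Y\ge 0$ and the measures live on $\mathbb R_+$) and checking that the synchronous coupling has the correct marginals; once these are in place the inequality is an elementary second-moment computation. I would emphasize that the synchronous coupling is what produces a clean bound, and that the absence of a cancellation in the $(x_1,x_2)$-directions is exactly what makes this a contraction estimate usable in the proof of Theorem~\ref{thm:hydro}.
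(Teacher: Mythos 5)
Your proof is correct, and it is the primal (coupling) version of the paper's dual argument; the underlying coupling is the same. The paper proves \eqref{eq:KW2} through the Kantorovich formula \eqref{def:Wasserstein-dual}: it takes an arbitrary pair $(\varphi,\psi)\in\Phi_2$, rescales $\tilde y_i=y_i/x_i$ exactly as you do, and then merges the two resulting integrals into a single integral against $B(y_1)B(y_2)\,dy_1\,dy_2$, so that $\varphi$ and $\psi$ are evaluated at $x_1-x_1y_1+x_2y_2$ and $x_1'-x_1'y_1+x_2'y_2$ with the \emph{same} $(y_1,y_2)$ --- this is precisely your synchronous coupling, seen through test functions; the constraint $\varphi(x)+\psi(X)\le|x-X|^2$ then yields the same second-moment expansion you carry out, and taking the supremum over $\Phi_2$ concludes. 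What your formulation buys: you bypass the dual formula entirely, since $W_2^2\le\mathbb E\big[(Y-Y')^2\big]$ is immediate once the joint law of $(Y,Y')$ is exhibited as an element of $\Pi\big(K(\cdot,x_1,x_2),K(\cdot,x_1',x_2')\big)$, and the probabilistic reading of $K$ makes the cancellation $Y-Y'=(x_1-x_1')(1-\xi_1)+(x_2-x_2')\xi_2$ transparent. What the paper's formulation buys: the same test-function manipulation is reused almost verbatim in Lemma~\ref{lem:contractionW2}, Theorem~\ref{thm:steady-states} and Theorem~\ref{thm:contraction}, where the two measures being compared are no longer pushforwards of a common pair $(\xi_1,\xi_2)$ and an explicit optimal-style coupling would be more awkward to write down. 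One caveat worth stating explicitly in your write-up: your key inequality uses that $W_2$ is an \emph{infimum} over couplings, which is the standard definition; the paper's displayed definition \eqref{def:Wasserstein} misprints this as a supremum, so you are correctly relying on the standard convention rather than on the paper's literal statement.
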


\begin{proof}[Proof of Lemma~\ref{lem:contraction}]
To estimate the Wasserstein distance $W_2^2\Big(K(\cdot,x_1,x_2),K(\cdot,x_1',x_2')\Big)$, we will use the Kantorovich dual formula. We consider $(\varphi,\psi)\in \Phi_2$ (see \eqref{def:Phi}), and estimate
\begin{eqnarray*}
I&=&\int \varphi(x)K(x,x_1,x_2)\,dx+\int \psi(X)K(X,x_1',x_2')\,dX\\
&=&\int \varphi(x)\int\int \delta_{x=x_1-y_1+y_2}\frac 1{x_1x_2}B\left(\frac{y_1}{x_1}\right)B\left(\frac{y_2}{x_2}\right)\,dy_1\,dy_2\,dx\\
&&+\int \psi(X)\int\int \delta_{X=x_1'-y_1'+y_2'}\frac 1{x_1'x_2'}B\left(\frac{y_1'}{x_1'}\right)B\left(\frac{y_2'}{x_2'}\right)\,dy_1'\,dy_2'\,dX\\
&=&\int\int  \varphi(x_1-y_1+y_2)\frac 1{x_1x_2}B\left(\frac{y_1}{x_1}\right)B\left(\frac{y_2}{x_2}\right)\,dy_1\,dy_2\\
&&+\int\int  \psi(x_1'-y_1'+y_2')\frac 1{x_1'x_2'}B\left(\frac{y_1'}{x_1'}\right)B\left(\frac{y_2'}{x_2'}\right)\,dy_1'\,dy_2',
\end{eqnarray*}
and then, thanks to the changes of variable $\tilde y_i=\frac{y_i}{x_i}$ and $\tilde y_i'=\frac{y_i'}{x_i}$ for $i=1,\,2$, we get
\begin{eqnarray}
I&=&\int\int \varphi(x_1-x_1y_1+x_2y_2)B\left(y_1\right) B\left(y_2\right)\,dy_1\,dy_2\nonumber\\
&&+\int\int  \psi(x_1'-x_1'y_1'+x_2'y_2')B\left(y_1'\right)B\left(y_2'\right)\,dy_1' \,dy_2'\nonumber\\
&=&\int\int  \left(\varphi(x_1-x_1y_1+x_2y_2)+\psi(x_1'-x_1'y_1+x_2'y_2)\right)B\left(y_1\right) B\left(y_2\right)\,dy_1\,dy_2.\label{eq:I=}
\end{eqnarray}
We can now use the fact that $(\varphi,\psi)\in\Phi_2$ (see \eqref{def:Phi}), and thus
\begin{eqnarray*}
I&\leq&\int\int  \left|(x_1-x_1y_1+x_2y_2)-(x_1'-x_1'y_1+x_2'y_2)\right|^2B\left(y_1\right) B\left(y_2\right)\,dy_1\,dy_2\\
&=&\int\int  \left|(x_1-x_1')(1-y_1)+(x_2-x_2')y_2\right|^2B\left(y_1\right) B\left(y_2\right)\,dy_1\,dy_2\\
&=&\int\int \Big((x_1-x_1')^2(1-y_1)^2+(x_2-x_2')^2y_2^2+2(x_1-x_1')(x_2-x_2')(1-y_1)y_2\Big)\\
&&\qquad B\left(y_1\right) B\left(y_2\right)\,dy_1\,dy_2\\
&=& (x_1-x_1')^2\int(1-y_1)^2B\left(y_1\right) \,dy_1+(x_2-x_2')^2\int y_2^2 B\left(y_2\right)\,dy_2\\
&&+2(x_1-x_1')(x_2-x_2')\left(\int (1-y_1) B\left(y_1\right)\,dy_1\right)\left(\int  y_2B\left(y_2\right)\,dy_2\right),
\end{eqnarray*}
and then, with the notations \eqref{def:lambdai},
\begin{equation*}
I\leq(x_1-x_1')^2(1+\lambda_2-2\lambda_1)+(x_2-x_2')^2\lambda_2+2(x_1-x_1')(x_2-x_2')(1-\lambda_1)\lambda_1.
\end{equation*}
Since this is true for any $(\varphi,\psi)\in\Phi_2$ (see \eqref{def:Phi}), we can use this estimate and \eqref{def:Wasserstein-dual} to show that
\begin{eqnarray*}
W_2^2\Big(K(\cdot,x_1,x_2),K(\cdot,x_1',x_2')\Big)&=&\max_{(\varphi,\psi)\in \Phi_2}I\\
&\leq& (1+\lambda_2-2\lambda_1)(x_1-x_1')^2+\lambda_2(x_2-x_2')^2\nonumber\\
&&+2(1-\lambda_1)\lambda_1(x_1-x_1')(x_2-x_2').
\end{eqnarray*}

\end{proof}

We now show that the exchange dynamics has a contraction effect on the population, in the sense of the $W_2-$distance:

\begin{lemma}\label{lem:contractionW2}
Let $K$ defined by \eqref{def:K2}, with $B\in \mathcal P_2(\mathbb R_+)$ satisfying \eqref{Ass1}. Then, for any $u_1,\,u_2\in \mathcal P_2(\mathbb R_+)$ such that
\begin{equation}\label{eq:1ermoment}
\int x u_1(x)\,dx=\int x u_2(x)\,dx,
\end{equation}
we have
\begin{equation}\label{eq:estK1}
W_2(u_1',u_2')\leq \left(1+2\int x(x-1)B(x)\,dx\right)^{\frac 12}W_2\left(u_1,u_2\right),
\end{equation}
where $u_i'(x):=\int K(x,x_1,x_2) u_i(x_1)u_i(x_2)\,dx_1\,dx_2$ for $i=1,\,2$. Moreover, 
\[\left(1+2\int x(x-1)B(x)\,dx\right)^{\frac 12}<1.\]
\end{lemma}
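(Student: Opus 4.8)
The plan is to lift the pointwise estimate of Lemma~\ref{lem:contraction} to the level of the measures $u_1',\,u_2'$ by building an explicit coupling and integrating. The starting observation is that $u_i'$ is the law of the post-exchange trait when the two incoming traits are drawn independently from $u_i$: if $x_1,\,x_2$ are i.i.d.\ with law $u_i$ and the output is sampled from $K(\cdot,x_1,x_2)$, then the marginal law of that output is exactly $u_i'$. This bilinear structure is what will let the first-moment hypothesis \eqref{eq:1ermoment} do its work.

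First I would fix an optimal coupling $\pi\in\Pi(u_1,u_2)$ realising $W_2(u_1,u_2)$, so that $\int (x-y)^2\,d\pi(x,y)=W_2(u_1,u_2)^2$, and draw two \emph{independent} pairs $(x_1,x_1')$ and $(x_2,x_2')$ from $\pi$. By construction $x_1,\,x_2$ are i.i.d.\ $\sim u_1$ and $x_1',\,x_2'$ are i.i.d.\ $\sim u_2$. Conditionally on these four values I would couple the two exchange outputs $K(\cdot,x_1,x_2)$ and $K(\cdot,x_1',x_2')$ by an optimal $W_2$-coupling, and glue everything together to produce a coupling of $u_1'$ and $u_2'$. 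This gluing yields
\[
W_2(u_1',u_2')^2\le \int\!\!\int W_2^2\Big(K(\cdot,x_1,x_2),K(\cdot,x_1',x_2')\Big)\,d\pi(x_1,x_1')\,d\pi(x_2,x_2').
\]

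Next I would insert the bound of Lemma~\ref{lem:contraction} and integrate the three resulting terms against $\pi\otimes\pi$. The two quadratic terms each integrate to $W_2(u_1,u_2)^2$, contributing the factor $(1+\lambda_2-2\lambda_1)+\lambda_2=1+2(\lambda_2-\lambda_1)$. The decisive step is the cross term: by independence of the two pairs it factorises as
\[
\int\!\!\int (x_1-x_1')(x_2-x_2')\,d\pi\,d\pi=\left(\int (x-y)\,d\pi(x,y)\right)^2=\left(\int x\,u_1(x)\,dx-\int x\,u_2(x)\,dx\right)^2,
\]
which vanishes precisely because of the equal-first-moment assumption \eqref{eq:1ermoment}. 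Hence the cross term drops out and I obtain $W_2(u_1',u_2')^2\le(1+2\lambda_2-2\lambda_1)W_2(u_1,u_2)^2$; recognising $2\lambda_2-2\lambda_1=2\int x(x-1)B(x)\,dx$ from \eqref{def:lambdai} gives the stated constant. For the strict inequality I would invoke \eqref{Ass1}: since $\int_{(0,1)}B=1$, the measure $B$ charges only the open interval $(0,1)$, where $x(x-1)<0$ strictly, so $\int x(x-1)B(x)\,dx<0$ and the factor lies strictly below $1$.

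I expect the only genuine difficulty to be the rigorous construction of the glued coupling, namely the measurable dependence of the conditional optimal couplings of $K(\cdot,x_1,x_2)$ and $K(\cdot,x_1',x_2')$ on the four parameters, which needs a standard but slightly technical measurable-selection argument. Once that is in place the remainder is a direct computation, and the crux of the result is conceptual rather than computational: it is the vanishing of the cross term forced by \eqref{eq:1ermoment} that turns the non-contractive pointwise estimate of Lemma~\ref{lem:contraction} into a genuine contraction.
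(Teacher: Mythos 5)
Your proof is correct, and its mathematical core --- integrating the pointwise bound of Lemma~\ref{lem:contraction} against the product coupling $\pi\otimes\pi$ and killing the cross term via the equal-first-moment hypothesis \eqref{eq:1ermoment} --- is exactly the paper's argument. The one place you genuinely diverge is how you justify the lifting inequality
\[
W_2^2(u_1',u_2')\le \int\!\!\int W_2^2\bigl(K(\cdot,x_1,x_2),K(\cdot,x_1',x_2')\bigr)\,d\pi(x_1,x_1')\,d\pi(x_2,x_2').
\]
You obtain it on the primal side, by gluing conditionally optimal couplings of the two kernels over independent draws from $\pi$; the paper obtains it on the dual side: it fixes $(\varphi,\psi)\in\Phi_2$, rewrites $\int\varphi\,du_1'+\int\psi\,du_2'$ as an integral against $\pi_t\otimes\pi_t$ (legitimate because each of the two inner terms depends only on variables whose $\pi_t$-marginals reproduce $u_1\otimes u_1$, resp.\ $u_2\otimes u_2$), bounds the inner bracket pointwise by $W_2^2\bigl(K(\cdot,x_1,x_2),K(\cdot,x_1',x_2')\bigr)$ using the dual inequality for each fixed admissible pair, and only at the end takes the supremum over $(\varphi,\psi)$ and the infimum over $\pi_t$. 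The dual route buys exactly what you flagged as the ``only genuine difficulty'' of yours: no coupling of $u_1'$ and $u_2'$ is ever constructed, so no measurable-selection argument is needed. If you prefer to stay primal, you can also avoid measurable selection by exploiting the explicit structure of $K$: conditionally on $(x_1,x_2,x_1',x_2')$, realize the two outputs as $x_1-x_1Y_1+x_2Y_2$ and $x_1'-x_1'Y_1+x_2'Y_2$ with the \emph{same} $Y_1,Y_2$ i.i.d.\ of law $B$; this coupling is trivially measurable in the parameters and reproduces the bound of Lemma~\ref{lem:contraction}. Two cosmetic differences: the paper keeps $\pi_t\in\Pi(u_1,u_2)$ arbitrary and infimizes at the end instead of fixing an optimal plan, and your derivation of the strict inequality from \eqref{Ass1} coincides with the paper's.
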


\begin{proof}[Proof of Lemma~\ref{lem:contractionW2}]
We want to estimate the $W_2$ distance between $u_1'$ and $u_2'$ thanks to \eqref{def:Wasserstein-dual}. We consider $(\varphi,\psi)\in\Phi_2$, and estimate
\begin{eqnarray*}
I&:=&\int \varphi(x)u_1'(x)\,dx+\int \psi(X)u_2'(X)\,dX\nonumber\\
&=&\int \varphi(x)\int\int K(x,x_1,x_2)u_1(x_1)u_1(x_2)\,dx_1\,dx_2\,dx\\
&&+\int \psi(X)\int\int K(X,x_1',x_2')u_2(x_1')u_2(x_2')\,dx_1'\,dx_2'\,dX\\
&=&\int\int \left(\int \varphi(x)K(x,x_1,x_2)\,dx\right) u_1(x_1)u_1(x_2)\,dx_1\,dx_2\\
&&+\int\int \left(\int \psi(X)K(X,x_1',x_2')\,dX\right)u_2(x_1')u_2(x_2')\,dx_1'\,dx_2'\\
\end{eqnarray*}
Remember that $u_1,u_2 \in \mathcal P_2(\mathbb R_+)$ implies $\int u_1(x)dx=\int u_2(x)dx=1$. It follows that 
\begin{equation*}
\begin{array}{l}
I=  \int_{\mathbb R_+^4} \left(\int \varphi(x)K(x,x_1,x_2)\,dx+\int \psi(X)K(X,x_1',x_2')\,dX\right)u_1(x_1)u_1(x_2)u_2(x_1')u_2(x_2')\,dx_1\,dx_2\,dx_1'\,dx_2'\\
\end{array}
\end{equation*}
Let now $\pi_t\in \Pi(u_1,u_2)$ (see \eqref{def:Wasserstein}). The above equality can be rewritten as follow
\begin{align}
&I = \int_{\mathbb R_+^4} \left(\int \varphi(x)K(x,x_1,x_2)\,dx+\int \psi(X)K(X,x_1',x_2')\,dX\right) \,d\pi_t(x_1,x_1')\,d\pi_t(x_2,x_2')\nonumber\\
&\quad\leq \int_{\mathbb R_+^4}  W_2^2\left(K(\cdot,x_1,x_2),K(\cdot,x_1',x_2')\right) \,d\pi_t(x_1,x_1')\,d\pi_t(x_2,x_2'),\label{est:L}
\end{align}
where we have used the fact that $(\varphi,\psi)\in\Phi_2$ as in formula \eqref{def:Wasserstein-dual}. We now use the result of Lemma~\ref{lem:contraction} for $W_2$ to obtain
\begin{align}
&I\leq\int\int \left((1+\lambda_2-2\lambda_1)(x_1-x_1')^2+\lambda_2(x_2-x_2')^2+2(1-\lambda_1)\lambda_1(x_1-x_1')(x_2-x_2')\right)\nonumber\\
&\qquad \,d\pi_t(x_1,x_1')\,d\pi_t(x_2,x_2')\nonumber\\
&\quad\leq(1+\lambda_2-2\lambda_1)\int\left(\int (x_1-x_1')^2\,d\pi_t(x_1,x_1')\right)\,d\pi_t(x_2,x_2')\nonumber\\
&\qquad+\lambda_2\int\left(\int (x_2-x_2')^2\,d\pi_t(x_2,x_2')\right)\,d\pi_t(x_1,x_1')\nonumber\\
&\qquad+2(1-\lambda_1)\lambda_1\left(\int(x_1-x_1')\,d\pi_t(x_1,x_1')\right)\left(\int(x_2-x_2')\,d\pi_t(x_2,x_2')\right).\label{eq:differents-bar-u1}
\end{align}
We notice next that thanks to \eqref{eq:1ermoment},
\begin{equation}\label{eq:differents-bar-u2}
\int(x-x')\,d\pi_t(x,x')=\int xu_1(x)\,dx-\int xu_2(x)\,dx=0,
\end{equation}
and then, since this estimate holds for any  $(\varphi,\psi)\in\Phi_2$, thanks to the Kantorovich dual formula \eqref{def:Wasserstein-dual},
\begin{eqnarray*}
W_2^2(u_1',u_2')&\leq&(1+2\lambda_2-2\lambda_1)\int (x_1-x_1')^2\,d\pi_t(x_1,x_1').
\end{eqnarray*}
Since this inequality holds for any $\pi_t\in \Pi(u_1,u_2)$ (with the notation of \eqref{def:Wasserstein}), we can take the minimum over such $\pi_t$, and get
\begin{eqnarray*}
W_2^2(u_1',u_2')&\leq& (1+2\lambda_2-2\lambda_1)W_2^2\left(u_1,u_2\right)\\
&=&\left(1+2\int x(x-1)B(x)\,dx\right)W_2^2\left(u_1,u_2\right).
\end{eqnarray*}
Finally, $\left(1+2\int x(x-1)B(x)\,dx\right)^{\frac 12}<1$ is a direct consequence of \eqref{Ass1}.
\end{proof}
%
%

\subsection{Steady-states of the pure exchange model}
\label{subsec:steady-states-pure-exchange}

We consider a model where the population is only affected by exchanges, that is

\begin{equation}\label{eq:model}
\partial_t u(t,x)=\int\int K(x,x_1,x_2)u(t,x_1)u(t,x_2)\,dx_1\,dx_2-u(t,x),
\end{equation}
where $K$ is defined by \eqref{def:K2}. The first result on this model is the existence of steady-states:
\begin{theorem}\label{thm:steady-states}
Let $K$ defined by \eqref{def:K2}, with $B\in \mathcal P_2(\mathbb R_+)$ satisfying \eqref{Ass1}, and $Z\in\mathbb R_+$. There is a unique steady-state $\bar u$ of \eqref{eq:model} such that $\int x \bar u(x)\,dx=Z$.

Moreover, there exists $C>0$ such that for any $Z_1, \,Z_2\geq 0$,
\begin{equation}\label{eq:differents-bar-u0}
W_2(\bar u_{Z_1},\bar u_{Z_2})\leq C|Z_1-Z_2|.
\end{equation}
\end{theorem}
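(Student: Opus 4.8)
The plan is to obtain existence and uniqueness from a contraction mapping argument, and the Lipschitz estimate by reusing — in a slightly more general form — the computation already carried out in the proof of Lemma~\ref{lem:contractionW2}.

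\emph{Existence and uniqueness.} Write the steady-state equation \eqref{eq:micro-equi} as $\bar u = T(\bar u)$, where $T(u)(x):=\int\int K(x,x_1,x_2)u(x_1)u(x_2)\,dx_1\,dx_2$ denotes the exchange operator. First I would record two elementary facts about $T$. (i) $T$ preserves the first moment: testing \eqref{def:K3} against $h(x)=x$ shows that the post-exchange mean of a pair $(x_1,x_2)$ is $(1-\lambda_1)x_1+\lambda_1 x_2$ (with $\lambda_1$ as in \eqref{def:lambdai}), so integrating against $u(x_1)u(x_2)$ gives $\int x\,T(u)(x)\,dx=\int x\,u(x)\,dx$. (ii) $T$ maps $\mathcal P_2(\mathbb R_+)$ into itself: testing \eqref{def:K3} against $h(x)=x^2$ and using $B\in\mathcal P_2(\mathbb R_+)$ shows the second moment stays finite, while $\int K(x,x_1,x_2)\,dx=1$ (established in Section~\ref{subsec:modelling-exchanges}) guarantees $T(u)$ is again a probability measure. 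Now fix $Z\geq 0$ and set $\mathcal P_2^Z:=\{u\in\mathcal P_2(\mathbb R_+):\int x\,u(x)\,dx=Z\}$. By (i)--(ii), $T$ maps $\mathcal P_2^Z$ into itself, and since all elements of $\mathcal P_2^Z$ share the first moment $Z$, Lemma~\ref{lem:contractionW2} makes $T$ a strict $W_2$-contraction on $\mathcal P_2^Z$ with ratio $\rho:=(1+2\int x(x-1)B(x)\,dx)^{1/2}<1$. Since $(\mathcal P_2(\mathbb R_+),W_2)$ is complete and the first moment is $1$-Lipschitz for $W_2$ (because $|\int x\,d\mu-\int x\,d\nu|\leq W_1(\mu,\nu)\leq W_2(\mu,\nu)$), the constraint set $\mathcal P_2^Z$ is closed, hence complete, and the Banach fixed-point theorem yields a unique $\bar u_Z\in\mathcal P_2^Z$ with $T(\bar u_Z)=\bar u_Z$.

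\emph{The Lipschitz estimate.} The contraction of Lemma~\ref{lem:contractionW2} only compares measures sharing the same first moment, whereas $\bar u_{Z_1}$ and $\bar u_{Z_2}$ do not. The key observation is that the computation in that proof yields, \emph{before} invoking the cancellation \eqref{eq:differents-bar-u2}, a clean inequality valid for arbitrary $u_1,u_2\in\mathcal P_2(\mathbb R_+)$. Running the argument up to \eqref{eq:differents-bar-u1} with an optimal coupling $\pi_t$ for $(u_1,u_2)$ and writing $m:=\int(x-x')\,d\pi_t=\int x\,u_1-\int x\,u_2$, I obtain
\[
W_2^2\big(T(u_1),T(u_2)\big)\leq \rho^2\,W_2^2(u_1,u_2)+2(1-\lambda_1)\lambda_1\,m^2,
\]
that is, $T$ is a contraction up to an error controlled by the gap between the first moments.

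\emph{Conclusion.} Apply the displayed inequality to $u_i=\bar u_{Z_i}$. Since $T(\bar u_{Z_i})=\bar u_{Z_i}$ and $\int x\,\bar u_{Z_i}=Z_i$, the left-hand side equals $W_2^2(\bar u_{Z_1},\bar u_{Z_2})$ and $m=Z_1-Z_2$, giving
\[
(1-\rho^2)\,W_2^2(\bar u_{Z_1},\bar u_{Z_2})\leq 2(1-\lambda_1)\lambda_1\,(Z_1-Z_2)^2.
\]
As $1-\rho^2=2\int x(1-x)B(x)\,dx>0$ by \eqref{Ass1}, dividing yields \eqref{eq:differents-bar-u0} with $C=\big(2(1-\lambda_1)\lambda_1/(1-\rho^2)\big)^{1/2}$. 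The only genuinely nontrivial point — and hence the main obstacle — is recognizing that the Lipschitz dependence must \emph{not} be attacked by comparing the two fixed points directly through the contraction (which fails, their first moments differing), but rather by extracting the residual cross-term $m^2$ from the Lemma~\ref{lem:contractionW2} computation and exploiting that $T$ fixes both measures while preserving first moments, so that the estimate becomes a solvable inequality for $W_2^2(\bar u_{Z_1},\bar u_{Z_2})$. Everything else (completeness of $\mathcal P_2^Z$, the self-mapping property, and the moment identities) is routine.
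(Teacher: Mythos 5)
Your proposal is correct and follows essentially the same route as the paper: existence and uniqueness via the Banach fixed-point theorem for $T$ on the complete metric space $\left\{u\in\mathcal P_2(\mathbb R_+):\int x\,u(x)\,dx=Z\right\}$ using the strict $W_2$-contraction of Lemma~\ref{lem:contractionW2}, and the Lipschitz estimate \eqref{eq:differents-bar-u0} by rerunning that lemma's computation up to \eqref{eq:differents-bar-u1}, keeping the cross term $2(1-\lambda_1)\lambda_1(Z_1-Z_2)^2$, and solving the resulting inequality for $W_2^2(\bar u_{Z_1},\bar u_{Z_2})$ using that both measures are fixed points of $T$. Your constant $C=\bigl((1-\lambda_1)\lambda_1/(\lambda_1-\lambda_2)\bigr)^{1/2}$ coincides with the paper's.
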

\begin{proof}[Proof of Theorem~\ref{thm:steady-states}]
Notice that $\left\{u\in\mathcal P_2(\mathbb R_+);\,\int x  u(x)\,dx=Z\right\}$ is a closed subset of $\mathcal P_2(\mathbb R_+)$ for $W_2$, and then $\left(\left\{u\in\mathcal P_2(\mathbb R_+);\,\int x  u(x)\,dx=Z\right\},W_2\right)$ is a complete metric space (see e.g. \cite{Villani}). Let
\begin{equation}\label{eq:T}
T: u\in\mathcal P_2(\mathbb R_+) \mapsto \left(\int\int K(\cdot,x_1,x_2) u(x_1)u(x_2)\,dx_1\,dx_2\right).
\end{equation}
\gr{More precisely, for any $h\in L^\infty(\mathbb R_+)$, thanks to \eqref{def:K3},
\begin{eqnarray*}
\int h(x)T(u)(x)\,dx&:=&\int\left(\int \left(\int\left(\int h(x_1-y_1+y_2) B\left(\frac{y_1}{x_1}\right)\,dy_1\right)B\left(\frac{y_2}{x_2}\right)\,d y_2\right) u(x_1)\,dx_1\right)u(x_2)\,dx_2\\
&\leq& \|h\|_{L^\infty},
\end{eqnarray*}
and}
\begin{eqnarray*}
\int x^2T(u)(x)\,dx&=& \int \int \left(\int x^2K(x,x_1,x_2)\,dx\right) u(x_1)u(x_2)\,dx_1\,dx_2\\
&=& \int \int \left(\int\int \left(x_1-y_1+y_2\right)^2B\left(\frac{y_1}{x_1}\right)B\left(\frac{y_2}{x_2}\right)\,dy_1\,dy_2\right) \\
&&\qquad u(x_1)u(x_2)\,dx_1\,dx_2\\
&=&\int \int \left(x_1^2(1+\lambda_2-2\lambda_1)+x_2^2\lambda_2+ 2x_1x_2\left(\lambda_1-\lambda_1^2\right)\right) \\
&&\qquad u(x_1)u(x_2)\,dx_1\,dx_2\\
&=&\left(\int x^2u(x)\,dx\right)(1+2\lambda_2-2\lambda_1)+ 2Z^2\left(\lambda_1-\lambda_1^2\right)<\infty.
\end{eqnarray*} 
\gr{Note that for any $x_1,\,x_1\geq 0$, $\textrm{supp }K(\cdot,x_1,x_2)$ is bounded, the unbounded test function $x\mapsto x^2$ can thus be modified to become an $L^\infty$ test function, for which $\int x^2K(x,x_1,x_2)\,dx$ is well defined. The computation above is thus valid.
} We can thus define $T$ as an operator mapping $\mathcal P_2(\mathbb R_+)$ into itself. Moreover, for any $u\in \left\{u\in\mathcal P_2(\mathbb R_+);\,\int x  u(x)\,dx=Z\right\}$,
\begin{eqnarray*}
\int xT(u)(x)\,dx&=& \int \int \left(\int xK(x,x_1,x_2)\,dx\right) u(x_1)u(x_2)\,dx_1\,dx_2\\
&=& \int \int \left(\int\int \left(x_1-y_1+y_2\right)B\left(\frac{y_1}{x_1}\right)B\left(\frac{y_2}{x_2}\right)\,dy_1\,dy_2\right) \\
&&\qquad u(x_1)u(x_2)\,dx_1\,dx_2\\
&=&\int \int \left(x_1-x_1\lambda_1+x_2\lambda_1\right) \\
&&\qquad u(x_1)u(x_2)\,dx_1\,dx_2\\
&=&\int xu(x)\,dx=Z,
\end{eqnarray*} 
so that the application $T$ maps $\left\{u\in \mathcal P_2(\mathbb R_+);\,\int x \bar u(x)\,dx=Z\right\}$ into itself. Finally, thanks to Lemma~\ref{lem:contractionW2}, the application is a strict contraction on this set. We can then apply the Banach fixed point Theorem, to show that there exists then a unique measure $\bar u\in\left\{u\in\mathcal P_2(\mathbb R_+);\,\int x  u(x)\,dx=Z\right\}$, such that 
\[\forall x\in\mathbb R_+,\quad \bar u=T(\bar u).\]
$\bar u$ is then the unique steady-state of \eqref{eq:model} such that $\int x \bar u(x)\,dx=Z$.

\medskip

 We can reproduce the proof of Lemma~\ref{lem:contractionW2} until \eqref{eq:differents-bar-u1} with $u_1:=\bar u_{Z_1}$ and $u_2:=\bar u_{Z_2}$. Here, \eqref{eq:differents-bar-u2} becomes
\begin{equation*}
\int(x-x')\,d\pi_t(x,x')=\int xu_1(x)\,dx-\int xu_2(x)\,dx=Z_1-Z_2,
\end{equation*}
and then, since this estimate holds for any  $(\varphi,\psi)\in\Phi_2$, thanks to the Kantorovich dual formula \eqref{def:Wasserstein-dual},
\begin{eqnarray*}
W_2^2(u_1',u_2')&\leq&(1+2\lambda_2-2\lambda_1)\int (x_1-x_1')^2\,d\pi_t(x_1,x_1')+2(1-\lambda_1)\lambda_1|Z_1-Z_2|^2.
\end{eqnarray*}
Since this inequality holds for any $\pi_t\in \Pi(u_1,u_2)$ (with the notation of \eqref{def:Wasserstein}), we can take the minimum over such $\pi_t$, and get
\begin{equation*}
W_2^2(u_1',u_2')\leq (1+2\lambda_2-2\lambda_1)W_2^2\left(u_1,u_2\right)+2(1-\lambda_1)\lambda_1|Z_1-Z_2|^2.
\end{equation*}
Now, $u_1=\bar u_{Z_1}$ and $u_2=\bar u_{Z_2}$ are steady-points of $T$, and then, $u_1'=u_1$, $u_2'=u_2$. Thus,
\begin{equation*}
W_2^2(\bar u_{Z_1},\bar u_{Z_2})\leq \frac {(1-\lambda_1)\lambda_1}{\lambda_1-\lambda_2}|Z_1-Z_2|^2.
\end{equation*}

\end{proof}

\subsection{Dynamics of the pure exchange model}
\label{subsec:dynamics-pure-exchange}

We show here that the equation \eqref{eq:model} induces a contraction for the $W_2-$distance, which implies in particular that all solutions converge to the steady-state given by Theorem~\ref{thm:steady-states}.


\begin{theorem}\label{thm:contraction}
Let $K$ defined by \eqref{def:K2}, with $B\in \mathcal P_2(\mathbb R_+)$ satisfying \eqref{Ass1}, and $Z\in\mathbb R_+$. 

Let two non negative solutions $u=u(t,x)\in L^\infty_{loc}(\mathbb R_+,L^\infty(\mathbb R_+))$ and $v=v(t,x)\in L^\infty_{loc}(\mathbb R_+,L^\infty(\mathbb R_+))$ of \eqref{eq:model} such that 
\[Z=\int x u(0,x)\,dx=\int x v(0,x)\,dx.\]
Then, $u$ and $v$ satisfy 
\begin{equation}\label{eq:uv-W2}
W_2^2\left(u(t,\cdot),v(t,\cdot)\right)\leq W_2^2\left(u(0,\cdot),v(0,\cdot)\right)e^{-c t},
\end{equation}
with $c=\int x(1-x)B(x)\,dx>0$.

In particular, any solution $u=u(t,x)$ of \eqref{eq:model} such that $\int x u(0,x)\,dx=Z$ converges to the steady-state $\bar u_Z$ defined in Theorem~\ref{thm:steady-states}:
\begin{equation}\label{eq:u-W2}
W_2^2\left(u(t,\cdot),\bar u_Z\right)\leq W_2^2\left(u(0,\cdot),\bar u_Z\right)e^{-c t},
\end{equation}
where $c=\int x(1-x)B(x)\,dx>0$.
\end{theorem}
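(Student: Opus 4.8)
The plan is to rewrite \eqref{eq:model} as $\partial_t u = T(u)-u$, where $T$ is the quadratic operator from \eqref{eq:T}, and to track the decay of the single quantity $D(t):=W_2^2(u(t,\cdot),v(t,\cdot))$. First I would record the two conservation laws that make Lemma~\ref{lem:contractionW2} applicable along the flow: integrating $\partial_t u = T(u)-u$ and using $\int K(x,x_1,x_2)\,dx=1$ gives $\frac{d}{dt}\int u\,dx=\big(\int u\,dx\big)^2-\int u\,dx$, so the total mass stays equal to $1$; and the computation $\int xT(u)\,dx=\int xu\,dx$ already carried out in the proof of Theorem~\ref{thm:steady-states} yields $\frac{d}{dt}\int x\,u(t,x)\,dx=0$. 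Hence $u(t,\cdot)$ and $v(t,\cdot)$ remain probability measures with the common first moment $Z$ for all $t\ge 0$, which is exactly the hypothesis \eqref{eq:1ermoment} required to invoke Lemma~\ref{lem:contractionW2}.

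The heart of the argument is the differential inequality
\[ D^{+}D(t)\ \le\ W_2^2\big(T(u(t,\cdot)),T(v(t,\cdot))\big)-D(t), \]
where $D^+$ denotes the upper right Dini derivative. To obtain it I would use the first-order expansion $u(t+h,\cdot)=(1-h)u(t,\cdot)+h\,T(u(t,\cdot))+o(h)$ coming from the integral form of \eqref{eq:model}, together with the convexity of $W_2^2$ along linear interpolations of measures: if $\gamma_0,\gamma_1$ are couplings of $(\mu_0,\nu_0)$ and $(\mu_1,\nu_1)$, then $(1-h)\gamma_0+h\gamma_1$ couples the corresponding convex combinations, so $W_2^2((1-h)\mu_0+h\mu_1,(1-h)\nu_0+h\nu_1)\le(1-h)W_2^2(\mu_0,\nu_0)+hW_2^2(\mu_1,\nu_1)$. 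Applying this with $\mu_0=u(t,\cdot),\,\mu_1=T(u(t,\cdot))$ and the analogous choice for $v$ gives $D(t+h)\le(1-h)D(t)+hW_2^2(T(u),T(v))+o(h)$, which is the claimed inequality. Feeding in Lemma~\ref{lem:contractionW2}, i.e. $W_2^2(T(u),T(v))\le(1+2\lambda_2-2\lambda_1)D(t)$ with $\lambda_1,\lambda_2$ as in \eqref{def:lambdai}, and using $1+2\lambda_2-2\lambda_1-1=2\int x(x-1)B(x)\,dx=-2c$, I get $D^+D(t)\le-2c\,D(t)$.

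A Grönwall argument for the Dini derivative then yields $D(t)\le D(0)e^{-2ct}$; since $c=\int x(1-x)B(x)\,dx>0$ (strict positivity follows from \eqref{Ass1}, as $B$ is supported in the open interval where $x(1-x)>0$), this in particular gives the stated bound $D(t)\le D(0)e^{-ct}$, that is \eqref{eq:uv-W2}. The convergence statement \eqref{eq:u-W2} is then immediate: the steady state $\bar u_Z$ of Theorem~\ref{thm:steady-states} satisfies $T(\bar u_Z)=\bar u_Z$, so $v(t,\cdot)\equiv\bar u_Z$ is a stationary solution of \eqref{eq:model} with first moment $Z$, and applying \eqref{eq:uv-W2} to the pair $(u,\bar u_Z)$ gives the claim.

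The step I expect to be the main obstacle is the rigorous justification of the Dini inequality, namely controlling the $o(h)$ remainder $\rho(h)=\int_t^{t+h}\big[(T(u(s))-u(s))-(T(u(t))-u(t))\big]\,ds$ at the level of $W_2$. This remainder is a mean-zero signed measure, and one must argue that $W_2\big((1-h)u(t,\cdot)+hT(u(t,\cdot))+\rho(h),\,(1-h)u(t,\cdot)+hT(u(t,\cdot))\big)=o(h)$; this should follow from the continuity of $s\mapsto T(u(s))-u(s)$ in the weak sense combined with the uniform second-moment bounds established in the proof of Theorem~\ref{thm:steady-states}, but it is the only genuinely technical point and deserves care given the low regularity $u\in L^\infty_{loc}(\mathbb R_+,L^\infty(\mathbb R_+))$ assumed on solutions.
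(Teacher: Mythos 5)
Your proposal follows the same strategy as the paper's proof: both rest on conservation of mass and of the first moment along the flow, on the $W_2^2$-contraction of the quadratic operator $T$ from \eqref{eq:T} given by Lemma~\ref{lem:contractionW2}, on a Gronwall argument producing the rate $2c$ (hence a fortiori the stated $e^{-ct}$), and on the observation that $v\equiv\bar u_Z$ is a stationary solution for the second claim. The only difference is in the execution of the differential step, and it is exactly where you flag an obstacle. You differentiate $D(t)=W_2^2(u(t,\cdot),v(t,\cdot))$ via the first-order expansion $u(t+h)=(1-h)u(t)+h\,T(u(t))+o(h)$, which forces you to measure a signed-measure remainder in a $W_2$-compatible way; this is genuinely awkward, since $W_2$ is not defined for signed measures, and controlling it at the regularity $L^\infty_{loc}(\mathbb R_+,L^\infty(\mathbb R_+))$ is delicate. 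The paper avoids this entirely by never differentiating $D$: it uses the exact Duhamel form
\[
u(t,\cdot)=e^{-t}u(0,\cdot)+\int_0^te^{-(t-s)}\,T(u(s,\cdot))\,ds,
\]
in which the weights $e^{-t}$ and $e^{-(t-s)}\,ds$ sum to one, and plugs it into the Kantorovich dual formula \eqref{def:Wasserstein-dual}: since the dual functional $(\mu,\nu)\mapsto\int\varphi\,d\mu+\int\psi\,d\nu$ is linear in the measures, one gets for every $(\varphi,\psi)\in\Phi_2$, and hence after taking the supremum,
\[
W_2^2\left(u(t,\cdot),v(t,\cdot)\right)\leq e^{-t}W_2^2\left(u(0,\cdot),v(0,\cdot)\right)+(1+2\lambda_2-2\lambda_1)\int_0^te^{-(t-s)}W_2^2\left(u(s,\cdot),v(s,\cdot)\right)\,ds,
\]
with no remainder at all; Gronwall applied to $y(t)=e^{t}W_2^2\left(u(t,\cdot),v(t,\cdot)\right)$ then yields the decay at rate $2(\lambda_1-\lambda_2)=2c$. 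This is precisely the integral-form version of your convexity argument (linearity of the dual functional is what underlies the joint convexity of $W_2^2$ you invoke), so replacing your infinitesimal expansion by the exact integral representation closes the one technical gap in your proposal and makes it coincide with the paper's proof.
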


\begin{remark}\label{rem:simple-alpha}
If $\alpha(x)=ax+b$, the dynamics of $N(t)$ and $Z(t)=\int x\frac{n(t,x)}{N(t)}\,dx$ only depends on $N$ and $Z$, and not on the detailed distribution $n$ of the population. It is then possible to show that $N$ and $Z$ converge exponentially to a limit $(\bar N,\bar Z)$. We can then use the contracting  properties of the exchange operator, just as in the proof of Theorem~\ref{thm:contraction} to prove the convergence of the solution $n$ to a unique limit. If $\alpha(\cdot)$ is more complicated however, it is not possible to write close equations on $N$ and $Z$, and to describe the long-time dynamics of the solution then, we will need to use a different metric, namely the $W_1-$Wasserstein distance, see Theorem~\ref{thm:hypo}.
\end{remark}

\begin{proof}[Proof of Theorem~\ref{thm:contraction}]

We now consider solutions $t\mapsto u(t,\cdot)\in\mathcal P_2(\mathbb R_+)$ of \eqref{eq:model}, which write
\[u(t,x)=e^{-t}u(0,x)+\int_0^te^{-(t-s)}\int\int K(x,x_1,x_2)u(t,x_1)u(t,x_2)\,dx_1\,dx_2.\]
Let $u$ and $v$ be two solutions of \eqref{eq:model} with respective initial conditions $u_0$, $v_0$. We want to estimate the $W_2$ distance between $u$ and $v$ thanks to \eqref{def:Wasserstein-dual}. We then consider $(\varphi,\psi)\in\Phi_2$, and estimate
\begin{eqnarray}
I&=&\int \varphi(x)u(t,x)\,dx+\int \psi(X)v(t,X)\,dX\nonumber\\
&=&e^{-t}\left(\int \varphi(x)u(0,x)\,dx+\int \psi(X)v(0,X)\,dX\right)\nonumber\\
&&+\int_0^t e^{-(t-s)}\Bigg[\int \varphi(x)\int\int K(x,x_1,x_2)u(t,x_1)u(t,x_2)\,dx_1\,dx_2\,dx\nonumber\\
&&\qquad+\int \psi(X)\int\int K(X,x_1',x_2')v(t,x_1')v(t,x_2')\,dx_1'\,dx_2'\,dX\Bigg]\,ds\nonumber\\
&\leq& e^{-t}W_2^2\left(u(0,\cdot),v(0,\cdot)\right)+\int_0^t (1+2\lambda_2-2\lambda_1)W_2^2(u(s,\cdot),v(s,\cdot))\,ds,\nonumber
\end{eqnarray}
thanks to the Kantorovich formula \eqref{def:Wasserstein-dual} and Lemma~\ref{lem:contractionW2}. Since this holds for any $(\varphi,\psi)\in \Phi_2$ (see \eqref{def:Wasserstein-dual}), we can pass to the suppremum over such functions to show that
\[W_2^2\left(u(t,\cdot),v(t,\cdot)\right)\leq e^{-t}W_2^2\left(u(0,\cdot),v(0,\cdot)\right)+(1+2\lambda_2-2\lambda_1)\int_0^t e^{s-t}W_2^2\left(u(s,\cdot),v(s,\cdot)\right)\,ds.\]
This estimate can be written
\[y(t)\leq W_2^2\left(u(0,\cdot),v(0,\cdot)\right)+\kappa\int_0^t y(s)\,ds,\]
where $y(t):=e^tW_2^2\left(u(t,\cdot),v(t,\cdot)\right)$. Then, thanks to the Gronwall inequality, $y(t)\leq W_2^2\left(u(0,\cdot),v(0,\cdot)\right)e^{(1+2\lambda_2-2\lambda_1) t}$, and \eqref{eq:uv-W2} follows. Finally, when $v=\bar u_Z$ is a steady-state of \eqref{eq:model}, \eqref{eq:uv-W2} proves the exponential convergence of any solution (such that $\int xu(0,x)\,dx=Z$) of \eqref{eq:model} to $\bar u_Z$. The proof of Theorem~\ref{thm:contraction} is then complete.

\end{proof}

\subsection{Proof of Theorem~\ref{thm:hydro}}
\label{subsec:proof-thm-asymptotic}

\noindent\emph{Step 1: Equations on $N(t)$, $Z(t)$, and $\frac{n(t,\cdot)}{N(t)}$}

For $t\geq 0$, we define $N(t)$ by \eqref{def:N}, and $Z(t)$ as
\begin{equation}\label{def:Z}
Z(t):=\int x\,\frac{n(t,x)}{N(t)}\,dx.
\end{equation}
We can integrate the equation \eqref{eq:model-rep} to get the following equation on $N$:
\begin{equation}\label{eq:N2}
\frac d{dt}N(t)=\left(r+\int \alpha(x)\frac{n(t,x)}{N(t)}\,dx-\beta N(t)\right)N(t),
\end{equation}
which implies that for all $t\geq 0$, $0\leq N(t)\leq \max\left(N(0),\frac{r+\|\alpha\|_\infty}\beta\right)$, and
\[\left\|N'(t)\right\|_\infty\leq C,\]
where the constant $C>0$ is independent of $\gamma>0$. If we derive $Z$, we get
\begin{eqnarray}
\frac d{dt}Z(t)&=&\frac 1{N(t)}\int x\partial_t n(t,x)\,dx-\frac {N'(t)}{N(t)^2} \int x n(t,x)\,dx\nonumber\\
&=&\left(r+\int \alpha(x)\frac{n(t,x)}{N(t)}\,dx\right)Z_b-\beta N(t) Z(t)\nonumber\\
&&-\left(r+\int \alpha(x)\frac{n(t,x)}{N(t)}\,dx-\beta N(t)\right)Z(t)\nonumber\\
&=&\left(r+\int \alpha(x)\frac{n(t,x)}{N(t)}\,dx\right)(Z_b-Z(t)),\label{eq:Z2}
\end{eqnarray}
where $Z_b:=\int x n_b(x)\,dx$. Since $\alpha\geq 0$, we have that for all $t\geq 0$, $0\leq Z(t)\leq \max\left(\int x\frac{n(0,x)}{N(0)}\,dx,Z_b\right)$, and 
\[\left\|Z'(t)\right\|_\infty\leq C,\]
where the constant $C>0$ is independent of $\gamma>0$.

We introduce the following normalized population:
\begin{equation}\label{def:tilden}
\tilde n(t,x):= \frac {n(t,x)}{N(t)},
\end{equation}
which satisfies
\begin{eqnarray}
\partial_t \tilde n(t,x)&=&\frac 1{N(t)}\partial_t n(t,x)-\frac {n(t,x)}{\left(\int n(t,y)\,dy\right)^2}\frac d{dt}\int n(t,x)\,dx\nonumber\\
&=&\left(r+\int \alpha (y) \tilde n(t,y)\,dy\right)n_b(x)-\beta n(t,x)\nonumber\\
&&+\frac \gamma{N(t)^2}\int\int K(x,x_1,x_2)n(t,x_1) n(t,x_2)\,dx_1\,dx_2\nonumber\\
&&-\frac \gamma{N(t)}n(t,x)\nonumber\\
&&-\left[\left(r+\int \alpha (y) \tilde n(t,y)\,dy\right)-\beta N(t)\right]\frac {n(t,x)} {N(t)},\nonumber
\end{eqnarray}
that is
\begin{eqnarray}
\partial_t \tilde n(t,x)&=&\left(r+\int \alpha (y) \tilde n(t,y)\,dy\right)\left(n_b(x)-\tilde n(t,x)\right)\nonumber\\
&&+\gamma \left[\int\int K(x,x_1,x_2)\tilde n(t,x_1) \tilde n(t,x_2)\,dx_1\,dx_2-\tilde n(t,x)\right].\label{eq:model-tilde-n}
\end{eqnarray}

\medskip

\noindent\emph{Step 2: uniform bound on the second moment of $n(t,\cdot)$}

%

$\tilde n$ satisfies
\begin{eqnarray*}
\tilde n(t,x)&=&\tilde n(0,x)e^{-\gamma t}\\
&&+\int_0^{t} e^{-\gamma(t-s)}\bigg[\left(r+\int \alpha (y) \tilde n(s,y)\,dy\right)(n_b(x)-\tilde n(s,x))\\
&&\qquad +\gamma T(\tilde n(s,\cdot))\bigg]\,ds,
\end{eqnarray*}
and then
\begin{eqnarray}
\int \tilde n(t,x)x^2\,dx&=&\left(\int \tilde n(0,x)x^2\,dx\right) e^{-\gamma t}\nonumber\\
&&+\int_0^{t} e^{-\gamma(t-s)}\bigg[\left(r+\int \alpha (y) \tilde n(s,y)\,dy\right)\left(\int \tilde n_b(x)x^2\,dx-\int \tilde n(s,x)x^2\,dx\right)\nonumber\\
&&\qquad +\gamma \int T(\tilde n(s,\cdot))(x)x^2\,dx\bigg]\,ds.\label{est:m21}
\end{eqnarray}
We use Lemma~\ref{lem:contractionW2} to estimate the last term: since $Z(t)$ is uniformly bounded,
\begin{eqnarray*}
\sqrt{\int x^2 T(\tilde n(s,\cdot))(x)\,dx}&=&W_2(T(\tilde n(s,\cdot)),\delta_0)\leq W_2(T(\tilde n(s,\cdot)),T(u_{Z(s)}))+W_2(u_{Z(s)},\delta_0)\\
&\leq& \left(1+2\int x(x-1)B(x)\,dx\right)W_2(\tilde n(s,\cdot),u_{Z(s)})+C\\
&\leq& \left(1+2\int x(x-1)B(x)\,dx\right)\sqrt{\int x^2 \tilde n(s,x)\,dx}+C,
\end{eqnarray*}
and then $\int x^2 T(\tilde n(s,\cdot))(x)\,dx\leq \kappa \int x^2 \tilde n(s,x)\,dx+C$, for some $\kappa\in(0,1)$. If we inject this estimate in \eqref{est:m21}, we get
\begin{eqnarray*}
\int \tilde n(t,x)x^2\,dx &\leq& C+\int_0^{t} e^{-\gamma(t-s)}\kappa\gamma \left(\int\tilde n(s,x)x^2\,dx\right)\,ds\\
&\leq& C+\kappa \left(\sup_{s\in[0,t]}\int\tilde n(s,x)x^2\,dx\right)\gamma\int_0^t e^{-\gamma(t-s)}\,ds\\
&\leq& C+\kappa \left(\sup_{s\in[0,t]}\int\tilde n(s,x)x^2\,dx\right).
\end{eqnarray*}
Thus, for all $t\geq 0$,
\begin{equation}\label{eq:2bound}
\int \tilde n(t,x)x^2\,dx\leq \max\left(\int \tilde n(0,x)x^2\,dx,\frac C{1-\kappa}\right).
\end{equation}
Note that this bound implies also a bound on $W_2^2(\tilde n(t,\cdot),\delta_0)=\int \tilde n(t,x)x^2\,dx$. Finally, we notice that this argument can be reproduced for the equation \eqref{eq:model}, with the initial condition $u(0,\cdot)=\delta_Z$. Then \eqref{eq:2bound} becomes $\int u(t,x)x^2\,dx\leq \max\left(Z^2,\frac C{1-\kappa}\right)$, and since $u$ converges to $\bar u_Z$ thanks to Theorem~\ref{thm:contraction}, we get
\begin{equation}\label{eq:2bound-u}
\int u_Z(x)x^2\,dx\leq C(Z^2+1),
\end{equation}
which, here also, is equivalent to $W_2(\tilde n(t,\cdot),\delta_0)\leq C(Z+1)$
%
%
%

%
%
%

%
%
%
%

\medskip

\noindent\emph{Step 3: Estimates on $W_2\left(\tilde n(t,\cdot),\bar u_{Z(t)}\right)$}
%
For $0\leq t<t+\tau$, $\tilde n$ can be written
\begin{eqnarray*}
\tilde n(t+\tau,x)&=&\tilde n(t,x)e^{-\gamma \tau}\\
&&+\int_t^{t+\tau} e^{-\gamma(t+\tau-s)}\bigg[\left(r+\int \alpha (y) \tilde n(s,y)\,dy\right)(n_b(x)-\tilde n(s,x))\\
&&\qquad +\gamma T(\tilde n(s,\cdot))\bigg]\,ds,
\end{eqnarray*}
where we have used the notation $T$ introduced in \eqref{eq:T}. Since $\bar u_{Z(t)}$ is a steady-state of \eqref{eq:model}, it satisfies
\begin{eqnarray*}
\bar u_{Z(t)}(x)&=&\bar u_{Z(t)}(x)e^{-\gamma\tau}\\
&&+\int_t^{t+\tau} e^{-\gamma(t+\tau-s)}\gamma T(\bar u_{Z(t)})(x)\,ds.
\end{eqnarray*}

We consider $(\varphi,\psi)\in\Phi_2$, and estimate
\begin{eqnarray*}
I&=&\int \varphi(x)\tilde n(t+\tau,x)\,dx+\int \psi(X)\bar u_{Z(t)}(X)\,dX\nonumber\\
&=&e^{-\gamma\tau}\left(\int \varphi(x)\tilde n(t,x)\,dx+\int \psi(X)\bar u_{Z(t)}(X)\,dX\right)\\
&&+\int_t^{t+\tau} e^{-\gamma(t+\tau-s)}\int \varphi(x)\left(r+\int \alpha (y) \tilde n(s,y)\,dy\right)(n_b(x)-\tilde n(s,x))\,dx\,ds\\
&&+\int_t^{t+\tau} e^{-\gamma(t+\tau-s)}\gamma\bigg(\int \psi(X)T(\tilde n(s,\cdot))+\int \varphi(x)T(\bar u_{Z(T)})(X)\,dX\bigg)\,ds.
\end{eqnarray*}
We can now use the formula \eqref{def:Wasserstein-dual} to get:
\begin{eqnarray}
I&\leq&e^{-\gamma\tau}W_2^2\left(\tilde n(t,\cdot),\bar u_{Z(t)}\right)\nonumber\\
&&+\int_t^{t+\tau} e^{-\gamma(t+\tau-s)}\left(r+\int \alpha (y) \tilde n(s,y)\,dy\right)\int \varphi(x)(n_b(x)-\tilde n(s,x))\,dx\,ds\nonumber\\
&&+\int_t^{t+\tau} e^{-\gamma(t+\tau-s)}\gamma W_2^2\left(T(\tilde n(s,\cdot)),T(\bar u_{Z(t)})\right)\,ds.\label{eq:estItruc}
\end{eqnarray}
To estimate the second term of this expression, we notice that for any couple $(\varphi,\psi)\in\Phi_2$, we have $-\varphi(y)\leq \psi(y)$, and then,
\begin{eqnarray*}
\int \varphi(x)(n_b(x)-\tilde n(s,x))\,dx&\leq& \int \varphi(x)n_b(x)\,dx+\int\psi(X)\tilde n(s,X)\,dX\\
&\leq& W_2^2(n_b,\tilde n(s,\cdot))\\
&\leq& \int x^2 n_b(x)\,dx+\int x^2 n(s,x)\,dx\leq C,
\end{eqnarray*}
thanks to the uniform bound \eqref{eq:2bound} obtained in Step~2. We can use this estimate as well as Lemma~\ref{lem:contractionW2} to estimate \eqref{eq:estItruc}, and if we additionally pass to the suppremum over $(\varphi,\psi)\in\Phi_2$, we obtain
\begin{eqnarray*}
W_2^2\left(\tilde n(t+\tau,\cdot),\bar u_{Z(t)}\right)&\leq&e^{-\gamma\tau}W_2^2\left(\tilde n(t,\cdot),\bar u_{Z(t)}\right)\\
&&+C(r+\|\alpha\|_\infty)\int_t^{t+\tau} e^{-\gamma(t+\tau-s)}\,ds\\
&&+(1+2\lambda_2-2\lambda_1)\gamma\int_t^{t+\tau} e^{-\gamma(t+\tau-s)} W_2^2\left(\tilde n(s,\cdot),\bar u_{Z(t)}\right)\,ds.
\end{eqnarray*}
We can then introduce, for $\sigma\geq 0$, $y(t+\sigma):=e^{\gamma\sigma}W_2^2\left(\tilde n(t+\sigma,\cdot),\bar u_{Z(t)}\right)$, that satisfies
\begin{eqnarray*}
y(t+\tau)&\leq&y(t)+\frac C\gamma\\
&&+(1+2\lambda_2-2\lambda_1)\gamma\int_0^{\tau} y(t+s)\,ds,
\end{eqnarray*}
and then, thanks to the Gronwall inequality,
\[y(t+\tau)\leq \left(y(t)+\frac C\gamma\right)e^{(1+2\lambda_2-2\lambda_1)\gamma\tau},\]
that is
\[W_2^2\left(\tilde n(t+\tau,\cdot),\bar u_{Z(t)}\right)\leq \left(W_2^2\left(\tilde n(t,\cdot),\bar u_{Z(t)}\right)+\frac {C}\gamma\right)e^{-2(\lambda_1-\lambda_2)\gamma\tau},\]
or
\begin{equation}\label{eq:chose2}
W_2\left(\tilde n(t+\tau,\cdot),\bar u_{Z(t)}\right)\leq \left(W_2\left(\tilde n(t,\cdot),\bar u_{Z(t)}\right)+\frac {C}\gamma\right)e^{-(\lambda_1-\lambda_2)\gamma\tau}.
\end{equation}
We notice next that thanks to Step~1, $|Z(t+\tau)-Z(t)|\leq C\tau$, which, combined to Theorem~\ref{thm:steady-states}, implies
\begin{equation}\label{eq:ttau}
 W_2\left(\bar u_{Z(t+\tau)},\bar u_{Z(t)}\right)\leq C\tau.
\end{equation}
Then,
\[W_2\left(\tilde n(t+\tau,\cdot),\bar u_{Z(t+\tau)}\right)\leq \left(W_2\left(\tilde n(t,\cdot),\bar u_{Z(t)}\right)+\frac {C}\gamma\right)e^{-(\lambda_1-\lambda_2)\gamma\tau}+C\tau.\]
If we choose $\tau:=\frac 1\gamma$, we get
\[W_2\left(\tilde n\left(t+1/\gamma,\cdot\right),\bar u_{Z(t+1/\gamma)}\right)\leq e^{-(\lambda_1-\lambda_2)} W_2\left(\tilde n(t,\cdot),\bar u_{Z(t)}\right)+ \frac {C}{\gamma},\]
where $e^{-(\lambda_1-\lambda_2)}<1$. As soon as 
\begin{equation}\label{eq:chose1}
W_2\left(\tilde n(t,\cdot),\bar u_{Z(t)}\right)\geq \frac 2 {1-e^{-(\lambda_1-\lambda_2)}}\frac {C}{\gamma}=\frac {C_1}\gamma,
\end{equation}
(we have
\[W_2\left(\tilde n\left(t+1/\gamma,\cdot\right),\bar u_{Z(t+1/\gamma)}\right)\leq \frac {1+e^{-(\lambda_1-\lambda_2)}}2 W_2\left(\tilde n(t,\cdot),\bar u_{Z(t)}\right).\]
As long as \eqref{eq:chose1} is satisfied for $W_2\left(\tilde n\left(t+k/\gamma,\cdot\right),\bar u_{Z(t+k/\gamma)}\right)$, we iterate this estimate, we get that for $k\in\mathbb N^*$,
\begin{eqnarray}
W_2\left(\tilde n\left(t+k/\gamma,\cdot\right),\bar u_{Z(t+k/\gamma)}\right)&\leq& \left(\frac {1+e^{-(\lambda_1-\lambda_2)}}2\right)^k W_2\left(\tilde n(t,\cdot),\bar u_{Z(t)}\right)\nonumber\\
&\leq&C\left(\frac {1+e^{-(\lambda_1-\lambda_2)}}2\right)^k,\nonumber
\end{eqnarray}
where we have used the estimate $W_2\left(\tilde n(t,\cdot),\bar u_{Z(t)}\right)\leq W_2\left(\tilde n(t,\cdot),\delta_0\right)+W_2\left(\delta_0,\bar u_{Z(t)}\right)\leq C$, thanks to \eqref{eq:2bound} and \eqref{eq:2bound-u}. Since $e^{-(\lambda_1-\lambda_2)}<1$, we can choose $\bar k$ such that $C\left(\frac {1+e^{-(\lambda_1-\lambda_2)}}2\right)^k< \frac {C_1}\gamma$, where $C_1$ is the constant defined in \eqref{eq:chose1}. There exists thus some $k\in\{0,\dots,\bar k\}$ such that \eqref{eq:chose1} does not hold for $W_2\left(\tilde n\left(t+k/\gamma,\cdot\right),\bar u_{Z(t+k/\gamma)}\right)$. Since this holds for any $t\geq 0$, we have shown that for any $t\geq \frac {\bar k}\gamma$,
\[\min_{s\in[t,t+\bar k/\gamma]}W_2\left(n\left(s,\cdot\right),\bar u_{Z(s)}\right)\leq \frac {C_1}\gamma.\]
This estimate combined to \eqref{eq:chose2} implies that there is a constant $C>0$ such that for all $t\geq \frac {\bar k}\gamma$, 
\begin{equation}\label{est:tilden-baru}
W_2\left(\tilde n\left(t,\cdot\right),\bar u_{Z(t)}\right)\leq \frac {C}{\gamma}.
\end{equation}

\medskip

\noindent\emph{Step 4: Estimates on $|Z(t)-\bar Z(t)|$}

Thanks to \eqref{eq:EDO} and \eqref{eq:Z2}, for $t\geq 0$,
\begin{equation}\label{eq:ZZ4}
|\bar Z(t)-Z_b|\leq |Z(0)-Z_b|e^{-rt},\quad |Z(t)-Z_b|\leq |Z(0)-Z_b|e^{-rt},
\end{equation}
and then 
\begin{equation}\label{eq:ZZ3}
|\bar Z-Z(t)|\leq \frac 1\gamma,
\end{equation}
for any $t\in\left[C\ln\gamma,\infty\right)$. We then simply need to estimate $|\bar Z-Z(t)|$ on $[0,C\ln\gamma]$. We estimate:
\begin{eqnarray}
Z(t)-\bar Z(t)&=&(Z(t)-Z_b)-(\bar Z(t)-Z_b)\nonumber\\
&=&(Z(0)-Z_b)e^{-\int_0^t \left(r+\int\alpha(x)\tilde n(s,x)\,dx\right)\,ds}-(Z(0)-Z_b)e^{-\int_0^t \left(r+\int\alpha(x)\bar u_{Z(s)}(x)\,dx\right)\,ds}\nonumber\\
&=&(Z(0)-Z_b)\left(e^{-\int_0^t\left( \int\alpha(x)(\tilde n(s,x)-\bar u_{\bar Z(s)}(x))\,dx\right)\,ds}-1\right)e^{-\int_0^t\left( r+\int\alpha(x)\bar u_{\bar Z(s)}(x)\,dx\right)\,ds}\label{eq:ZZ1}
\end{eqnarray}
To estimate the first term in this expression, we use the duality formula for $W_1$ (see \eqref{def:Wasserstein-dual-W1}):
\begin{eqnarray}
\left|\int\alpha(x)\left(\tilde n(s,x)-\bar u_{\bar Z(s)}(x)\right)\,dx\right|&\leq& \|\alpha'\|_\infty W_1\left(\tilde n(s,\cdot),\bar u_{\bar Z(s)}\right)\nonumber\\
&\leq& \|\alpha'\|_\infty W_2\left(\tilde n(s,\cdot),\bar u_{\bar Z(s)}\right)\nonumber\\
&\leq& \|\alpha'\|_\infty \left(W_2\left(\tilde n(s,\cdot),\bar u_{Z(s)}\right)+W_2\left(\bar u_{Z(s)},\bar u_{\bar Z(s)}\right)\right)\label{eq:ZZ5}
\end{eqnarray}
where the second inequality can be easily obtained from the definition~\ref{def:Wasserstein} of Wasserstein distances thanks to a Cauchy-Schwarz inequality. We can now apply the result of the previous step as well as Lemma~\ref{lem:contractionW2} to get
\begin{equation}\label{est:intalpha}
\left|\int\alpha(x)\left(\tilde n(s,x)-\bar u_{\bar Z(s)}(x)\right)\,dx\right|\leq \|\alpha'\|_\infty \left(\frac C\gamma+C|Z(s)-\bar Z(s)|\right),
\end{equation}
for any $t\geq \frac C\gamma$, while $\int_0^{C/\gamma}\left|\int\alpha(x)\left(\tilde n(s,x)-\bar u_{\bar Z(s)}(x)\right)\,dx\right|\,ds\leq \frac C\gamma$ and $\int_0^{C/\gamma}\left|Z(s)-\bar Z(s)\right|\,ds\leq \frac C\gamma$. \eqref{eq:ZZ1} then becomes
\begin{eqnarray*}
\left|Z(t)-\bar Z(t)\right|&\leq&(Z(0)-Z_b)\left(e^{C\left(\frac 1\gamma+\int_0^t\left(\frac 1\gamma+\left|Z(s)-\bar Z(s)\right|\right)\,ds\right)}-1\right)e^{-\int_0^t\left( r+\int\alpha(x)\bar u_{\bar Z(s)}(x)\,dx\right)\,ds}\\
&\leq& C\left(\frac {1+t}\gamma+\int_0^t\left|Z(s)-\bar Z(s)\right|\,ds\right)e^{-r t},
\end{eqnarray*}
As long as 
\begin{equation}\label{eq:condZZ}
\frac {1+t}\gamma+\int_0^t\left|Z(s)-\bar Z(s)\right|\,ds\leq 1.
\end{equation}
 $y(t)=e^{rt}\left|Z(t)-\bar Z(t)\right|$ then satisfies
\begin{eqnarray*}
y(t)&\leq& \frac {C(1+t)}\gamma+\int_0^tCe^{-rs}y(s)\,ds.
\end{eqnarray*}
We apply the Gronwall inequality to obtain
\[y(t)\leq \frac {C(1+t)}\gamma e^{\int_0^t e^{-rs}\,ds},
\]
and then 
\begin{equation}\label{eq:ZZ2}
\left|Z(t)-\bar Z(t)\right|\leq \frac {C(1+t)}{\gamma r} e^{-rt}.
\end{equation}
For $t\in [0,C\ln\gamma]$, we have then 
\begin{eqnarray*}
\frac {1+t}\gamma+\int_0^t\frac {C(1+s)}{\gamma r} e^{-rs}\,ds&\leq& \frac {1+C\ln\gamma}\gamma+\int_0^{C\ln\gamma}\frac {C(1+s)}{\gamma r} e^{-rs}\,ds\\
&\leq&\frac {1+C\ln\gamma}\gamma+\frac {C(1+s)}{\gamma r^2}<1,
\end{eqnarray*}
provided $\gamma>0$ is large enough. Then \eqref{eq:condZZ} is satisfied for all $t\in [0,C\ln \gamma]$, and \eqref{eq:ZZ2} holds for $t\in [0,C\ln \gamma]$, which, combined to \eqref{eq:ZZ3}, shows that if $\gamma>0$ is large enough, then for any $t\geq 0$, 
\begin{equation}\label{est:Z2}
\left|Z(t)-\bar Z(t)\right|\leq \frac C\gamma.
\end{equation}
If we combine\eqref{est:Z2} to \eqref{est:tilden-baru} and \eqref{eq:differents-bar-u0}, we can show that for all $t\geq \frac C\gamma$,
\begin{eqnarray}
W_2\left(\tilde n(t,\cdot),\bar u_{\bar Z(t)}\right)&\leq& W_2\left(\tilde n(t,\cdot),\bar u_{Z(t)}\right)+W_2\left(\bar u_{Z(t)},\bar u_{\bar Z(t)}\right)\nonumber\\
&\leq&\frac C\gamma+C|Z(t)-\bar Z(t)|\leq\frac C\gamma.\label{eq:estnu}
\end{eqnarray}

%

%
%
%

\medskip

\noindent\emph{Step 4: Estimates on $|N(t)-\bar N(t)|$}

Let us first notice that $N'(t)\geq \left(r-\beta N(t)\right)N(t)$, and thus, for $t\geq 0$,
\begin{equation}\label{eq:minoration_N}
N(t)\geq \min\left(N(0),\frac r\beta\right).
\end{equation}
We define $\tilde N:=\frac 1\beta\left(r+\int\alpha(x) \bar u_{Z_b}(x)\,dx\right)$, and estimate:
\begin{eqnarray*}
\left(N-\tilde N\right)'(t)&=&\left(r+\int\alpha(x)\tilde n(t,x)\,dx-\beta N(t)\right) N(t)\\
&=&\left(\int\alpha(x)\left(\tilde n(t,x)-\bar u_{Z_b}(x)\right)\,dx-\beta \left(N(t)-\tilde N(t)\right)\right) N(t),
\end{eqnarray*}
To estimate the first term, we use an argument similar to the one used in \eqref{eq:ZZ5}, and then estimates \eqref{est:tilden-baru}, \eqref{est:Z2}, \eqref{eq:ZZ4} and Lemma~\ref{lem:contractionW2} to  shows that for $t\geq \frac C\gamma$,
\begin{align*}
&\left|\int\alpha(x)\left(\tilde n(t,x)-\bar u_{Z_b}(x)\right)\,dx\right|\\
&\quad \leq\|\alpha'\|_\infty\left(W_2\left(\tilde n(t,\cdot),\bar u_{Z(t)}\right)+W_2\left(\bar u_{Z(t)},\bar u_{\bar Z(t)}\right)+W_2\left(\bar u_{\bar Z(t)},\bar u_{Z_b}\right)\right)\\
&\quad \leq C\left(\frac 1\gamma+e^{-rt}\right),
\end{align*}
provided $\gamma>0$ is large enough. Then, for $t\geq \ln(\gamma)$ and $\gamma>0$ large enough, $\left|\int\alpha(x)\left(\tilde n(t,x)-\bar u_{Z_b}(x)\right)\,dx\right|\leq \frac C\gamma$, and then, thanks to \eqref{eq:minoration_N},
\begin{eqnarray*}
\frac d{dt}\left|N-\tilde N\right|(t)&\leq& \left(\frac C\gamma-\beta\left|N(t)-\tilde N(t)\right|\right)N(t)\\
&\leq& -\frac 1C\left|N(t)-\tilde N(t)\right|,
\end{eqnarray*}
provided $\left(N-\tilde N\right)(t)\geq \frac {C}{ \gamma}$ and $t\geq \ln(\gamma)$. Since moreover $\left(N-\tilde N\right)(\ln(\gamma))$ is bounded uniformly in $\gamma$ (see Step~1), we have, for $t\geq \ln(\gamma)$,
\begin{equation}\label{eq:estN-large-t}
\left|N-\tilde N\right|(t)\leq \max\left(Ce^{\frac {-t}C},\frac C\gamma\right),
\end{equation}

Finally, we compute
\begin{eqnarray*}
(N-\bar N)'(t)&=&\left(r+\int \alpha(x)\tilde n(t,x)\,dx-\beta N(t)\right)\left(N(t)-\bar N(t)\right)\\
&&+\left(\int \alpha(x)\left(\bar u_{\bar Z(t)}(x)-\tilde n(t,x)\right)\,dx-\beta \left(\bar N(t)-N(t)\right)\right)\bar N(t),
\end{eqnarray*}
and then,
\begin{eqnarray*}
(N-\bar N)(t)&=&\int_0^t \left(\int \alpha(x)\left(\bar u_{\bar Z(s)}(x)-\tilde n(s,x)\right)\,dx\right)\bar N(s) \\
&&\quad \exp\left[\int_s^t\left(r+\int \alpha(x)\tilde n(\sigma,x)\,dx-\beta (\bar N(\sigma)+N(\sigma))\right)\,d\sigma\right]\,ds,
\end{eqnarray*}
%
Thanks to an argument similar to the one used to obtain \eqref{eq:ZZ5}, and then \eqref{est:tilden-baru}, \eqref{est:Z2} and Lemma~\ref{lem:contractionW2}, we can show that for $t\geq \frac C\gamma$,
\begin{align*}
&\left|\int\alpha(x)\left(\bar u_{\bar Z(s)}(x)-\tilde n(t,x)\right)\,dx\right|\\
&\quad \leq\|\alpha'\|_\infty\left(W_2\left(\tilde n(t,\cdot),\bar u_{Z(t)}\right)+W_2\left(\bar u_{Z(t)},\bar u_{\bar Z(t)}\right)\right)\leq \frac C\gamma,
\end{align*}
and since moreover $N$, $\bar N$ are bounded (see Step~1) and $\alpha$ is Lischitz continuous, we get
\[(N-\bar N)(t)\leq C\int_0^t\left(1_{s\in\left[0,C/\gamma\right]} +\frac 1\gamma\right) e^{C(t-s)}\,ds\leq \frac C\gamma e^{C_2 t},\]
and then, if we combine this estimate for $t\leq \frac {\ln\gamma}{2C_2}$ to \eqref{eq:estN-large-t} for $t\geq \frac {\ln\gamma}{2C_2}$, we get
\[\left|N(t)-\bar N(t)\right|\leq \max\left(Ce^{\frac {-ln\gamma }C},\frac C{\sqrt{\gamma}},\frac C\gamma\right),\]
that is 
\begin{equation}\label{est:N}
\left|N(t)-\bar N(t)\right|\leq C\gamma^{-\nu},
\end{equation}
for some $\nu>0$.
%
%
%
%
%
Brought together, \eqref{est:Z2}, \eqref{est:N} and \eqref{eq:estnu} conclude the proof of Theorem~\ref{thm:hydro}.

\section{Convergence of the solution to a unique steady-state}
\label{sec:proof-thm2}

\subsection{Contraction of the exchanges in the $W_1-$distance}
\label{subsec:contractionW1}

\begin{lemma}\label{lem:contraction-1}
Let $x_1,\,x_2,\,x_1',\,x_2'\in\mathbb R_+^*$, and $K$ as in \eqref{def:K2}. We define $\lambda_1,\,\lambda_2\in [0,1]$ as in \eqref{def:lambdai}. We have
\begin{align}
&W_1\Big(K(\cdot,x_1,x_2),K(\cdot,x_1',x_2')\Big)\leq (1-\lambda_1)|x_1-x_1'|+\lambda_1|x_2-x_2'|.\label{eq:KW1}
\end{align}
\end{lemma}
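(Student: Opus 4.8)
The plan is to mirror the proof of Lemma~\ref{lem:contraction} verbatim, but using the Kantorovich--Rubinstein dual formula \eqref{def:Wasserstein-dual-W1} instead of the quadratic dual \eqref{def:Wasserstein-dual}. So I would take a single test function $\psi$ with $\|\psi'\|_\infty\leq 1$ and estimate
\[
I=\int\psi(x)\,K(x,x_1,x_2)\,dx-\int\psi(X)\,K(X,x_1',x_2')\,dX.
\]
Using the explicit form \eqref{def:K3} of $K$ and the change of variables $\tilde y_i=y_i/x_i$ (exactly as in the displayed computation leading to \eqref{eq:I=}), both integrals become integrals against the product measure $B(y_1)B(y_2)\,dy_1\,dy_2$, so that
\[
I=\int\!\!\int\Big(\psi(x_1-x_1y_1+x_2y_2)-\psi(x_1'-x_1'y_1+x_2'y_2)\Big)B(y_1)B(y_2)\,dy_1\,dy_2.
\]

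Next I would exploit the Lipschitz bound $\|\psi'\|_\infty\leq 1$, which gives $|\psi(a)-\psi(b)|\leq|a-b|$ pointwise. Applying this with $a=x_1-x_1y_1+x_2y_2$ and $b=x_1'-x_1'y_1+x_2'y_2$ yields
\[
I\leq\int\!\!\int\big|(x_1-x_1')(1-y_1)+(x_2-x_2')y_2\big|\,B(y_1)B(y_2)\,dy_1\,dy_2,
\]
the $W_1$-analogue of the squared integrand in Lemma~\ref{lem:contraction}. Here, crucially, the integrand is a sum rather than a square, so I can simply apply the triangle inequality $|a+b|\leq|a|+|b|$ and separate the two terms; this avoids the cross term that produced the $(x_1-x_1')(x_2-x_2')$ contribution in the $W_2$ case. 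The first term integrates to $|x_1-x_1'|\int|1-y_1|B(y_1)\,dy_1$ and the second to $|x_2-x_2'|\int|y_2|B(y_2)\,dy_2$.

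Finally I would evaluate these two one-dimensional integrals using $\operatorname{supp}B\subset[0,1]$: on this support $y_2\geq 0$ and $1-y_1\geq 0$, so the absolute values drop, and $\int|1-y_1|B(y_1)\,dy_1=1-\lambda_1$ while $\int|y_2|B(y_2)\,dy_2=\lambda_1$, with $\lambda_1=\int xB(x)\,dx$ as in \eqref{def:lambdai}. This gives $I\leq(1-\lambda_1)|x_1-x_1'|+\lambda_1|x_2-x_2'|$. Since the bound is uniform over all admissible $\psi$, taking the supremum and invoking \eqref{def:Wasserstein-dual-W1} yields \eqref{eq:KW1}. I do not anticipate a genuine obstacle: the only subtlety is that $W_1$ is itself the distance (no square root bookkeeping as in $W_2$), and the one and only place the argument is genuinely simpler than Lemma~\ref{lem:contraction} is that the triangle inequality cleanly decouples the two trait differences, so no cross term survives and the support condition on $B$ is exactly what removes the absolute values.
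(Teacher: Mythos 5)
Your proof is correct and takes essentially the same route as the paper's: the paper reproduces the computation of Lemma~\ref{lem:contraction} for a dual pair $(\varphi,\psi)\in\Phi_1$ and bounds $\varphi(a)+\psi(b)\leq|a-b|$, which is exactly your Lipschitz estimate in the equivalent Kantorovich--Rubinstein formulation \eqref{def:Wasserstein-dual-W1}. The subsequent steps --- triangle inequality to decouple the two differences, use of $\operatorname{supp}B\subset[0,1]$ to drop the absolute values, and identification of the moments $1-\lambda_1$ and $\lambda_1$ --- coincide with the paper's argument.
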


\begin{proof}[Proof of Lemma~\ref{lem:contraction-1}]
The proof of the first part of Lemma~\ref{lem:contraction} can be reproduced for $(\varphi,\psi)\in\Phi_1$ until \eqref{eq:I=}. Similarly, we can use the fact that $(\varphi,\psi)\in\Phi_1$ (see \eqref{def:Phi}) to get
\begin{eqnarray*}
I&\leq&\int \left|(x_1-x_1y_1+x_2y_2)-(x_1'-x_1'y_1+x_2'y_2)\right|B\left(y_1\right) B\left(y_2\right)\,dy_1\,dy_2\\
&\leq& |x_1-x_1'|\int(1-y_1)B\left(y_1\right) \,dy_1+|x_2-x_2'|\int y_2 B\left(y_2\right)\,dy_2
\end{eqnarray*}
and then, with the notations \eqref{def:lambdai},
\begin{equation*}
I\leq |x_1-x_1'|(1-\lambda_1)+|x_2-x_2'|\lambda_1.
\end{equation*}
Since this is true for any $(\varphi,\psi)\in\Phi_1$ (see \eqref{def:Phi}), we can use this estimate and \eqref{def:Wasserstein-dual} to show that
\begin{eqnarray*}
W_1\Big(K(\cdot,x_1,x_2),K(\cdot,x_1',x_2')\Big)&=&\max_{(\varphi,\psi)\in \Phi_1}I\\
&\leq& |x_1-x_1'|(1-\lambda_1)+|x_2-x_2'|\lambda_1.
\end{eqnarray*}
\end{proof}

\subsection{Proof of Theorem~\ref{thm:hypo}}
\label{subsec:proof-thm-rep-cv}

\noindent\emph{Step 1: Rough estimate on the first moment of $\tilde n(t,\cdot)$}

We recall the notation $\tilde n(t,x)=\frac{n(t,x)}{N(t)}$, as well as the equation \eqref{eq:Z2} satisfied by $Z(t)=\int x\tilde n(t,x)\,dx$.
%
 For $t\geq 0$,
\begin{eqnarray*}
\left|Z(t)-\int x n_b(x)\,dx\right|&\leq& \left|Z(0)-\int x n_b(x)\,dx\right|\\
&\leq& \int x n(0,x)\,dx+\int x n_b(x)\,dx,
\end{eqnarray*}
and in particular,
\begin{eqnarray}
W_1(n_b,n(t,\cdot))&\leq&W_1(n_b,\delta_0)+W_1(\delta_0,n(t,\cdot))\nonumber\\
&\leq & W_1(n_b,\delta_0)+\int x\tilde n(t,x)\,dx\nonumber\\
&\leq&2\int x n(0,x)\,dx+\int x n_b(x)\,dx.\label{est:barx}
\end{eqnarray}

\medskip

\noindent\emph{Step 2: The contraction argument}

Let $n_1,\,n_2$ be two solutions of \eqref{eq:model-rep}, associated to two initial solutions $n_1^0,\,n_2^0$. We denote $\tilde n_1,\,\tilde n_2$ the two corresponding renormalized measures, and 
\begin{equation}\label{def:omega}
\omega(t):=r+\frac 12\left(\int \alpha (y) \tilde n_1(t,y)\,dy+\int \alpha (y) \tilde n_2(t,y)\,dy\right).
\end{equation}
Thanks to \eqref{eq:model-tilde-n}, $\tilde n_i$, for $i=1,\,2$
%
 can be written
\begin{eqnarray*}
\tilde n_i(t,x)&=&\tilde n_i^0(x)e^{-\int_0^t\omega(s)+\gamma\,ds}\\
&&+\int_0^t e^{-\int_s^t\omega(s)+\gamma\,ds}\Bigg[\left(r+\int \alpha (y) \tilde n_i(s,y)\,dy\right)n_b(x)\\
&&\quad+\left(\int \alpha (y) \frac{\tilde n_{i^c}(s,y)-\tilde n_i(s,y)}2\,dy\right)\tilde n_i(s,x)\\
&&\quad +\gamma\int\int K(x,x_1,x_2)\tilde n_i(s,x_1) \tilde n_i(s,x_2)\,dx_1\,dx_2\Bigg]\,ds,
\end{eqnarray*}
where $i^c=1$ if $i=2$, and $i^c=2$ if $i=1$. We consider $(\varphi,\psi)\in\Phi_1$, and estimate:
\begin{eqnarray}
I&=&\int \varphi(x)\tilde n_1(t,x)\,dx+\int \psi(X)\tilde n_2(t,X)\,dX\nonumber\\
&=&e^{-\int_0^t\omega(s)+\gamma\,ds}\left[\int \varphi(x)\tilde n_1^0(x)\,dx+\int \psi(X)\tilde n_1^0(X)\,dX\right]\nonumber\\
&&+\int_0^t e^{-\int_s^t\omega(s)+\gamma\,ds}\Bigg[\int \varphi(x)\Bigg(\left(r+\int \alpha (y) \tilde n_1(s,y)\,dy\right)n_b(x)\nonumber\\
&&\qquad+\left(\int \alpha (y) \frac{\tilde n_2(s,y)-\tilde n_1(s,y)}2\,dy\right)\tilde n_1(s,x)\Bigg)\,dx\nonumber\\
&&\qquad+\int \psi(X)\Bigg(\left(r+\int \alpha (y) \tilde n_2(s,y)\,dy\right)n_b(X)\nonumber\\
&&\qquad+\left(\int \alpha (y) \frac{\tilde n_1(s,y)-\tilde n_2(s,y)}2\,dy\right)\tilde n_2(s,X)\Bigg)\,dX\Bigg]\,ds\nonumber\\
&&+\gamma\int_0^t e^{-\int_s^t\omega(s)+\gamma\,ds}\Bigg[\int \varphi(x)\left(\int\int K(x,x_1,x_2)\tilde n_1(s,x_1) \tilde n_1(s,x_2)\,dx_1\,dx_2\right)\,dx\nonumber\\
&&\qquad+\int \psi(X)\left(\int\int K(x,x_1,x_2)\tilde n_2(s,x_1) \tilde n_2(s,x_2)\,dx_1\,dx_2\right)\,dX\Bigg]\,ds\label{eq:estIrep}
\end{eqnarray}
To estimate the first term of this expression, we simply use the formula \eqref{def:Wasserstein-dual} to obtain:
\begin{equation}\label{eq:estt0}
\int \varphi(x)\tilde n_1^0(x)\,dx+\int \psi(X)\tilde n_1^0(X)\,dX\leq W_1(\tilde n_1^0,\tilde n_2^0).
\end{equation}
Let 
\[\theta(t):= \int \alpha (y) \frac{\tilde n_1(t,y)-\tilde n_2(t,y)}2\,dy.\]
We estimate the second term of \eqref{eq:estIrep} as follows:
\begin{eqnarray*}
J&=&\omega(s)\left(\int \varphi(x)n_b(x)\,dx+\int \psi(X)n_b(X)\,dX\right)\nonumber\\
&&+\theta(s)\int (\varphi(x)-\psi(x))n_b(x)\,dx\nonumber\\
&&-\theta(s)\left(\int \varphi(x)\tilde n_1(s,x)\,dx-\int \psi(X)\tilde n_2(s,X)\,dX\right)\\
&=&\omega(s)\left(\int \varphi(x)n_b(x)\,dx+\int \psi(X)n_b(X)\,dX\right)\nonumber\\
&&+\theta(s)\left(\int \varphi(x)n_b(x)\,dx+\int \psi(x)\tilde n_2(X)\,dX\right)\nonumber\\
&&-\theta(s)\left(\int \varphi(x)\tilde n_1(s,x)\,dx+\int \psi(X)n_b(X)\,dX\right)
\end{eqnarray*}
%
%
%
%
Since $(\varphi,\psi)\in \Phi$, $\varphi(x)+\psi(x)=0$, and combining this property to the formula \eqref{def:Wasserstein-dual}, we get
\begin{equation*}
J\leq |\theta(s)|\Big(W_1(n_b,\tilde n_1(s,\cdot))+W_1(n_b,\tilde n_2(s,\cdot))\Big),
\end{equation*}
and then, thanks to \eqref{est:barx} and the definition of $\theta(\cdot)$ and \eqref{def:Wasserstein-dual},
\begin{equation}\label{est:J}
J\leq C_1\|\alpha'\|_\infty W_1\left(\tilde n_1(s,\cdot),\tilde n_2(s,\cdot)\right),
\end{equation}
where 
\begin{equation}\label{def:C1}
C_1:=4\int x n_b(x)\,dx+\int x (\tilde n_1(0,x)+\tilde n_2(0,x))\,dx.
\end{equation}
%
%
Finally, to estimate the last term of \eqref{eq:estIrep}, we reproduce the method employed in \eqref{est:L}, but with $(\varphi,\psi)\in \Phi_1$: for any $\pi_s\in \Pi\left(\tilde n_1(s,\cdot),\tilde n_2(s,\cdot)\right)$ (see \eqref{def:Wasserstein}),
\begin{eqnarray*}
L&=&\int \varphi(x)\left(\int\int K(x,x_1,x_2)\tilde n_1(s,x_1) \tilde n_1(s,x_2)\,dx_1\,dx_2\right)\,dx\nonumber\\
&&\qquad+\int \psi(X)\left(\int\int K(X,x_1,x_2)\tilde n_2(s,x_1) \tilde n_2(s,x_2)\,dx_1\,dx_2\right)\,dX\\
&\leq&\int\int \left(\int \varphi(x)K(x,x_1,x_2)\,dx+\int \psi(X)K(X,x_1',x_2')\,dX\right)\,d\pi_t(x_1,x_1')\,d\pi_t(x_2,x_2')\\
&\leq& \int\int W_1\left(K(\cdot,x_1,x_2),K(\cdot,x_1',x_2')\right)\,d\pi_t(x_1,x_1')\,d\pi_t(x_2,x_2')
\end{eqnarray*}
We can now use the second estimate of Lemma~\ref{lem:contraction} to get
\begin{eqnarray*}
L&\leq& \int\int (1-\lambda_1)|x_1-x_1'|+\lambda_1|x_2-x_2'| \,d\pi_t(x_1,x_1')\,d\pi_t(x_2,x_2')\\
&=&\int |x_1-x_1'|\,d\pi_t(x_1,x_1'),
\end{eqnarray*}
and since this estimate holds for any $\pi_s\in \Pi\left(\tilde n_1(s,\cdot),\tilde n_2(s,\cdot)\right)$, we get (see \eqref{def:Wasserstein}):
\begin{equation}\label{est:L2}
L\leq W_1(\tilde n_1(s,\cdot),\tilde n_2(s,\cdot)).
\end{equation}

Finally, \eqref{eq:estIrep} becomes
\begin{eqnarray*}
I&\leq& e^{-\int_0^t\omega(s)+\gamma\,ds}W_1(\tilde n_1^0,\tilde n_2^0)\\
&&+\int_0^te^{-\int_s^t\omega(\sigma)+\gamma\,d\sigma}C_1\|\alpha'\|_\infty W_1\left(\tilde n_1(s,\cdot),\tilde n_2(s,\cdot)\right)\,ds\\
&&+\gamma \int_0^te^{-\int_s^t\omega(\sigma)+\gamma\,d\sigma}W_1(\tilde n_1(s,\cdot),\tilde n_2(s,\cdot))\,ds.
\end{eqnarray*}
Since this estimate is independent of $(\varphi,\psi)\in\Phi_1$, we can apply \eqref{def:Wasserstein-dual} to get
\begin{eqnarray*}
W_1\left(\tilde n_1(t,\cdot),\tilde n_2(t,\cdot)\right)&\leq& e^{-\int_0^t\omega(s)+\gamma\,ds}W_1(\tilde n_1^0,\tilde n_2^0)\\
&&+\int_0^te^{-\int_s^t\omega(\sigma)+\gamma\,d\sigma}\left(C_1\|\alpha'\|_\infty+\gamma\right) W_1\left(\tilde n_1(s,\cdot),\tilde n_2(s,\cdot)\right)\,ds.
\end{eqnarray*}

and then $y(t):=e^{\int_0^t\omega(s)+\gamma\,ds}W_1\left(\tilde n_1(t,\cdot),\tilde n_2(t,\cdot)\right)$ satisfies:
\begin{equation*}
y(t)\leq W_1(\tilde n_1^0,\tilde n_2^0)+\int_0^t\left(C_1\|\alpha'\|_\infty+\gamma\right)y(s)\,ds,
\end{equation*}
and then, thanks to the Gronwall inequality,
\begin{equation*}
y(t)\leq W_1(\tilde n_1^0,\tilde n_2^0)e^{\left(C_1\|\alpha'\|_\infty+\gamma\right)t},
\end{equation*}
that is
\begin{equation*}
W_1\left(\tilde n_1(t,\cdot),\tilde n_2(t,\cdot)\right)\leq  W_1(\tilde n_1^0,\tilde n_2^0)e^{C_1\|\alpha'\|_\infty t-\int_0^t\omega(s)\,ds}.
\end{equation*}
Thanks to the definition\eqref{def:omega} of $\omega$, $\omega\geq r+\min_{x\in \mathbb R_+}\alpha(x)$, and then,

\begin{equation}\label{est:n1n2}
W_1\left(\tilde n_1(t,\cdot),\tilde n_2(t,\cdot)\right)\leq  W_1(\tilde n_1^0,\tilde n_2^0)e^{-\left(r+\min_{x\in \mathbb R_+}\alpha(x)-C_1\|\alpha'\|_\infty \right)t}.
\end{equation}

\medskip

\noindent\emph{Step 4: Convergence of $\tilde n$}

To show that \eqref{eq:model-tilde-n} admits a steady-state, let $n(t,\cdot)\in \mathcal P_1(\mathbb R_+)$ be a solution of \eqref{eq:model-rep}, and $\tilde n$ the corresponding normalized solution. For any $0\leq \sigma\leq \tau$, we use the estimate \eqref{est:n1n2} with $\tilde n_1(t,x)=\tilde n(t,x)$, $\tilde n_2(t,x)=\tilde n((\tau-\sigma)+t,x)$, to get
\begin{eqnarray}
W_1\left(\tilde n_1(\sigma,\cdot),\tilde n_2(\tau,\cdot)\right)&\leq&  W_1(\tilde n(0,\cdot),\tilde n(\tau-\sigma,\cdot))e^{-\left(r+\min_{x\in \mathbb R_+}\alpha(x)-C_1\|\alpha'\|_\infty \right)\sigma}\nonumber\\
&\leq& \left(W_1(\tilde n(0,\cdot),\delta_0)+W_1(\delta_0,\tilde n(\tau-\sigma,\cdot))\right)e^{-\left(r+\min_{x\in \mathbb R_+}\alpha(x)-C_1\|\alpha'\|_\infty \right)\sigma}\nonumber\\
&\leq&C_1e^{-\left(r+\min_{x\in \mathbb R_+}\alpha(x)-C_1\|\alpha'\|_\infty \right)\sigma},\label{est:Cauchy1}
\end{eqnarray}
Thanks to the estimate~\eqref{est:barx} and the definition~\ref{def:C1} of $C_1$. It follows that for any sequence $t_n\to \infty$, $(\tilde n(t_n,\cdot))_n$ is a Cauchy sequence in the complete metric space $\left(\mathcal P_1(\mathbb R_+),W_1\right)$, and thus converge to a limit. Thanks to \eqref{est:Cauchy1}, this limit is indeed independent of the sequence $(t_n)$, so that there exists a limit $\bar n\in \mathcal P_1(\mathbb R_+)$ such that $\tilde n(t,\cdot)\to_{t\to\infty} \bar n(x)$ in $\left(\mathcal P_1(\mathbb R_+),W_1\right)$. It follows that $\tilde n(t,\cdot)$ is a steady-state of \eqref{eq:model-tilde-n}. 

Let now $n(t,x)$ be any solution of \eqref{eq:model-rep}, and $\tilde n$ defined as in  \eqref{def:tilden}. With the steady-state $\bar n$ of \eqref{eq:model-tilde-n} at hand, we can use \eqref{est:n1n2} with $\tilde n_1(t,x)=\bar n(x)$ and $\tilde n_2=\tilde n$, to show that $\tilde n$ converges exponentially fast to $\bar n$:
\begin{equation}\label{est:cv-tilde-n}
W_1\left(\tilde n(t,\cdot),\bar n\right)\leq  C_1\,e^{-\left(r+\min_{x\in \mathbb R_+}\alpha(x)-C_1\|\alpha'\|_\infty \right)t}.
\end{equation}


\medskip

\noindent\emph{Step 5: Convergence of $N$ and $n$}

Thanks to the definition \eqref{def:tilden} of $\tilde n$, we simply need to show that $N(t,x)=\int n(t,x)\,dx$ converges to the following limit:
\[\bar N:=\frac 1\beta\left(r+\int \alpha(x)\bar n(x)\,dx\right).\]
Thanks to \eqref{eq:N2}, $N$ satisfies:
\[N'(t)= \left(\int \alpha(x)\left(\bar n-\tilde n(t,x)\right)\,dx-\beta (N(t)-\bar N)\right)N(t),\]
where 
\begin{eqnarray*}
\left|\int \alpha(x)\left(\bar n-\tilde n(t,x)\right)\,dx\right|&\leq& \|\alpha'\|_\infty W_1\left(\bar n,n(t,\cdot)\right)\\
&\leq& C_1\|\alpha'\|_\infty e^{-\left(r+\min_{x\in \mathbb R_+}\alpha(x)-C_1\|\alpha'\|_\infty \right)t},
\end{eqnarray*}
thanks to \eqref{est:cv-tilde-n}. Thanks to this estimate, there exists $T>0$ such that for any $t\geq T$, $\left|\int \alpha(x)\left(\bar n-\tilde n(t,x)\right)\,dx\right|\leq \frac{\bar N}2$, and then, for $t\geq T$,
\[N'(t)\geq -\beta N(t)\left(N(t)-\frac{\bar N}2\right),\]
and then $N(t)\geq \frac{\bar N}4$ for any $t\geq T'$. Then, for $t\geq T'$,
\begin{eqnarray}
\left|N(t)-\bar N\right|&\leq&\left|N(T')-\bar N\right|e^{-\beta\int_{T'}^tN(s)\,ds}\nonumber\\
&&+\int_{T'}^t \left|\int \alpha(x)\left(\bar n-\tilde n(s,x)\right)\,dx\right|e^{-\beta\int_s^tN(\sigma)\,d\sigma}\,ds\nonumber\\
&\leq& \frac{3\bar N}4 e^{-\frac{\beta \bar N}4(t-T')}\nonumber\\
&&+C_1\|\alpha'\|_\infty \int_{T'}^t  e^{-\left(r+\min_{x\in \mathbb R_+}\alpha(x)-C_1\|\alpha'\|_\infty \right)s}e^{-\frac{\beta \bar N}4(t-s)}\,ds\nonumber\\
&\leq&C e^{-\min\left(r+\min_{x\in \mathbb R_+}\alpha(x)-C_1\|\alpha'\|_\infty ,\frac{\beta \bar N}4\right)t}.\label{est:cv-N}
\end{eqnarray}

Finally, we check that the convergence of $\tilde n$ and $N$ implies the convergence of $n$, for the weak-* topology of measures. Let $\varphi\in C^0(\mathbb R_+)$ and any $\varepsilon>0$. Thanks to the density of $C^1(\mathbb R_+)$ in $C^0(\mathbb R_+)$, there exists $\tilde \varphi\in C^1(\mathbb R_+)$ such that $\|\tilde \varphi'\|_\infty<\infty$, and $\|\varphi-\tilde \varphi\|_\infty\leq \varepsilon$. Then,
\begin{eqnarray*}
\left|\int \varphi(x) \left(n(t,x)-\bar N\bar n(x)\right)\,dx\right|&\leq&\|\varphi-\tilde\varphi\|_\infty\left(\|n(t,\cdot)\|_{L^1}+\bar N\|\bar n\|_{L^1}\right)\\
&&+\left|\int \tilde \varphi(x) \left(N(t,x)\tilde n(t,x)-\bar N \bar n(x)\right)\,dx\right|\\
&\leq&C\varepsilon+|N(t)-\bar N|\|\tilde\varphi\|_\infty+\bar N\|\tilde \phi'\|_\infty W_1(\tilde n(t,\cdot),\bar n)\\
&\leq& (C+1)\varepsilon,
\end{eqnarray*}
provided $t$ is large enough, thanks to  \eqref{est:cv-tilde-n} and \eqref{est:cv-N}. This estimates with \eqref{est:cv-tilde-n} and \eqref{est:cv-N} conclude the proof of Theorem~\ref{thm:hypo}.

\section*{Acknowledgements}
The second author acknowledges support from the ANR under grant  Kibord: ANR-13-BS01-0004, and MODEVOL: ANR-13-JS01-0009.

\end{document}